 \newtheorem{theorem}{Theorem}
 \newtheorem{lemma}[theorem]{Lemma}
 \newtheorem{proposition}[theorem]{Proposition}
\theoremstyle{definition}
 \newtheorem{definition}{Definition}
 \newtheorem{remark}{Remark}
\newcommand{\blue}{\color{black}}
\newcommand{\magenta}{\color{black}}
\newcommand{\black}{\color{black}}
\newcommand{\green}{\color{black}}
\def\RR{\mathbb{R}}\def\R{\mathbb{R}}
\def\ZZ{\mathbb{Z}}
\def\NN{\mathbb{N}}
\def\PP{\mathbb{P}}
\def\EE{\mathbb{E}}\def\E{\mathbb{E}}
\def\Var{\mathbb{V}\mathrm{ar}}
\def\Cov{\mathbb{C}\mathrm{ov}}
\def\ga{\gamma}
\def\de{\delta}
\def\ph{\varphi}
\def\wt{\widetilde}
\def\dtv{d_{\mathrm{TV}}}
\newcommand{\norm}[1]{\left\lVert#1\right\rVert}
\renewcommand{\qed}{\hfill$\square$}
\DeclareMathOperator\dom{dom} 
\title[Rate for the Breuer-Major theorem]{\black \small Berry-Esseen bounds in the Breuer-Major CLT and Gebelein's inequality}
\author{Ivan Nourdin}
\address{Ivan Nourdin, Universit\'e du Luxembourg, 
Unit\'e de Recherche en Math\'ematiques,
Maison du Nombre,
6 avenue de la Fonte,
L-4364 Esch-sur-Alzette,
Grand Duch\'e du Luxembourg}
\email{ivan.nourdin@uni.lu} 
\thanks{}
\author{Giovanni Peccati}
\address{Giovanni Peccati, Universit\'e du Luxembourg, 
Unit\'e de Recherche en Math\'ematiques,
Maison du Nombre,
6 avenue de la Fonte,
L-4364 Esch-sur-Alzette,
Grand Duch\'e du Luxembourg}
\email{giovanni.peccati@uni.lu} 
\thanks{}
\author{Xiaochuan Yang}
\address{Xiaochuan Yang, Universit\'e du Luxembourg, 
Unit\'e de Recherche en Math\'ematiques,
Maison du Nombre,
6 avenue de la Fonte,
L-4364 Esch-sur-Alzette,
Grand Duch\'e du Luxembourg}
\email{xiaochuan.yang@uni.lu} 
\thanks{}
\date{\today}
\begin{document}
\maketitle

\medskip

\begin{abstract}

{\black We derive explicit Berry-Esseen bounds in the total variation distance for the Breuer-Major central limit theorem, in the case of a subordinating function $\varphi$ satisfying minimal regularity assumptions. Our approach is based on the combination of the Malliavin-Stein approach for normal approximations with Gebelein's inequality, bounding the covariance of functionals of Gaussian fields in terms of maximal correlation coefficients.}\\

\noindent{{\bf Keywords: } Breuer-Major theorem, rate of convergence, Gebelein's inequality, Malliavin-Stein approach.
}
\end{abstract}

\section{Introduction}

\subsection{Motivation and main results}

Let $X=(X_k)_{k\in\ZZ}$ be a centered stationary Gaussian sequence with covariance function $\EE[X_kX_j]=\rho(k-j)$ satisfying $\rho(0)=1$. Let $\ph\in L^2(\RR,\ga)$, where $\ga$ is the standard Gaussian measure on the real line, and assume without loss of generality that $\EE[\varphi(X_1)]=\int_\R \varphi d\gamma=0$.  {\black By exploiting the orthogonality and completeness of Hermite polynomials in} $L^2(\RR,\ga)$ (see, e.g., \cite[p.~13]{NPbook}), we can write 
\begin{equation}\label{e:he}
\ph = \sum_{\ell\ge d}a_\ell H_\ell,
\end{equation}
where $H_\ell$ is the Hermite polynomial of order $\ell$, the coefficient $a_d$ is different from zero, $d\ge 1$ is the {\it Hermite rank} of $\ph$, and the series converges in $L^2(\RR,\ga)$. 
Consider the {\black sequence of} normalized sums
\begin{align}\label{e:F_n}
F_n =  \frac{1}{\sqrt{n}} \sum_{k=1}^n \ph(X_k), \quad {\black n\geq 1}.
\end{align}
{\black The celebrated {\it Breuer-Major theorem} \cite{BM}, stated below, provides sufficient conditions on the covariance function $\rho$, in order for $F_n$ to exhibit Gaussian fluctuations, as $n\to \infty$} (see also Taqqu \cite{T} for a related work). {\black Throughout the paper, the symbol $N(a, b)$ denotes a Gaussian random variable with mean $a\in \R$ and variance $b\geq 0$}, {\green and $\overset{d}\to $ the convergence in distribution. }
\begin{theorem}[Breuer-Major Theorem]\label{BMtheorem}
Let the previous assumptions on $X$ and $\varphi$ prevail,
and suppose moreover that
 $\sum_{k\in\ZZ} |\rho(k)|^d <\infty$. Then  $F_n\overset{d}\to N(0,\sigma^2)$, where
\begin{align}\label{e:sigma}
\sigma^2 = \sum_{\ell\ge d} a_\ell^2 \ell! \sum_{k\in\ZZ} \rho(k)^\ell<\infty.
\end{align}
Here, and for the rest of the paper, $\overset{d}\rightarrow$ denotes convergence in distribution {\black of random variables}. 
\end{theorem}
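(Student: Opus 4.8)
The plan is to recast the problem in the language of isonormal Gaussian processes and Wiener chaos, which is the setting underlying the Malliavin-Stein approach. I would first choose a real separable Hilbert space $\mathfrak H$, an isonormal Gaussian process $\{W(h):h\in\mathfrak H\}$, and unit vectors $e_k\in\mathfrak H$ with $\langle e_j,e_k\rangle_{\mathfrak H}=\rho(j-k)$ and $X_k=W(e_k)$; this is available precisely because $\rho$ is a normalized ($\rho(0)=1$) covariance. Using $H_\ell(W(e_k))=I_\ell(e_k^{\otimes\ell})$, where $I_\ell$ is the $\ell$-th multiple Wiener-It\^o integral, the expansion \eqref{e:he} turns \eqref{e:F_n} into the chaos decomposition $F_n=\sum_{\ell\ge d}a_\ell\,I_\ell(f_{\ell,n})$ with $f_{\ell,n}=n^{-1/2}\sum_{k=1}^n e_k^{\otimes\ell}$. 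The orthogonality relation $\EE[I_\ell(f)I_{\ell'}(g)]=\delta_{\ell\ell'}\,\ell!\,\langle f,g\rangle_{\mathfrak H^{\otimes\ell}}$ then reduces the entire problem to understanding the kernels $f_{\ell,n}$.

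Next I would settle the variance. Orthogonality gives $\EE[F_n^2]=\sum_{\ell\ge d}a_\ell^2\,\ell!\,\|f_{\ell,n}\|^2=\sum_{\ell\ge d}a_\ell^2\,\ell!\,\frac1n\sum_{j,k=1}^n\rho(j-k)^\ell$, and for fixed $\ell$ the inner average equals $\sum_{|m|<n}(1-|m|/n)\rho(m)^\ell$. Since $|\rho(m)|^\ell\le|\rho(m)|^d$ (as $|\rho|\le1$ and $\ell\ge d$) with $\sum_m|\rho(m)|^d<\infty$, dominated convergence yields the limit $\sum_{m\in\ZZ}\rho(m)^\ell$, and the same domination, combined with $\sum_\ell a_\ell^2\ell!=\EE[\ph(X_1)^2]<\infty$, lets me sum over $\ell$, giving $\EE[F_n^2]\to\sigma^2<\infty$ and proving finiteness in \eqref{e:sigma}. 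To go from a single Hermite polynomial to the whole series I would truncate: setting $F_n^{(M)}=n^{-1/2}\sum_k\sum_{d\le\ell\le M}a_\ell H_\ell(X_k)$, the same computation gives $\sup_n\EE[(F_n-F_n^{(M)})^2]\le\big(\sum_m|\rho(m)|^d\big)\sum_{\ell>M}a_\ell^2\ell!\to0$ as $M\to\infty$, uniformly in $n$. A routine approximation lemma (uniform $L^2$-closeness plus $\sigma_M^2\to\sigma^2$) then reduces the CLT to proving $F_n^{(M)}\overset{d}\to N(0,\sigma_M^2)$ for each fixed $M$.

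For fixed $M$, $F_n^{(M)}$ is a finite linear combination of multiple integrals, and I would conclude via fourth-moment theory. If $d=1$ the first-chaos term $I_1(f_{1,n})$ is exactly Gaussian, so only chaoses of order $\ell\ge\max(d,2)$ require work. For each such $\ell$, the Nualart-Peccati theorem asserts that $I_\ell(f_{\ell,n})$ converges to a centred Gaussian as soon as its contraction norms $\|f_{\ell,n}\otimes_r f_{\ell,n}\|_{\mathfrak H^{\otimes 2(\ell-r)}}\to0$ for every $1\le r\le\ell-1$; since distinct chaoses are uncorrelated, the covariance matrix converges to a diagonal one and componentwise convergence upgrades to joint convergence to independent Gaussians by the Peccati-Tudor theorem. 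Hence $F_n^{(M)}\overset{d}\to N(0,\sigma_M^2)$, and the proof collapses to a single analytic statement about contractions.

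That contraction estimate is the main obstacle. Expanding inner products yields $\|f_{\ell,n}\otimes_r f_{\ell,n}\|^2=n^{-2}\sum_{i,j,k,l=1}^n\rho(i-j)^r\rho(k-l)^r\rho(i-k)^{\ell-r}\rho(j-l)^{\ell-r}$, which (after a Cauchy-Schwarz split of the four-cycle into two length-two paths) is essentially the Hilbert-Schmidt norm of a product of two symmetric Toeplitz matrices with symbols $|\rho|^r$ and $|\rho|^{\ell-r}$. The decisive structural inputs, both forced by the range $1\le r\le\ell-1$, are that every one of the four factors carries a genuine power of $\rho$, and that $|\rho|^r\in\ell^{\ell/r}(\ZZ)$ and $|\rho|^{\ell-r}\in\ell^{\ell/(\ell-r)}(\ZZ)$ with conjugate exponents $\tfrac r\ell+\tfrac{\ell-r}\ell=1$; a Young-type convolution inequality built on these integrabilities then shows the quantity is $o(1)$. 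I expect the genuinely delicate point to be that one must exploit the finite summation window $\{1,\dots,n\}$ throughout: simply dropping the constraints and passing to infinite convolutions is too lossy when $\rho\notin\ell^1$, so extracting the correct decay uniformly over all admissible $r$ is the technical heart of the argument. Once this vanishing is established, the fourth-moment criterion closes and, through the truncation step, the theorem follows. \qed
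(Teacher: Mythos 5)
The paper does not prove Theorem \ref{BMtheorem}: it is quoted as a classical result from \cite{BM} (see also \cite[Theorem 7.2.4]{NPbook} for the modern proof), so there is no internal argument to compare against. Your proposal follows exactly that modern route --- embedding into an isonormal process, writing $F_n=\sum_{\ell\ge d}a_\ell I_\ell(f_{\ell,n})$, computing the variance by dominated convergence against $|\rho|^d$, truncating the Hermite expansion at level $M$ uniformly in $n$, and invoking the fourth-moment and Peccati--Tudor theorems --- and all of these steps are correctly set up, including the exact contraction identity $\|f_{\ell,n}\otimes_r f_{\ell,n}\|^2=n^{-2}\sum_{i,j,k,l}\rho(i-j)^r\rho(k-l)^r\rho(i-k)^{\ell-r}\rho(j-l)^{\ell-r}$.

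The gap is in the one step you yourself flag as the technical heart. Summing the four-cycle over the two ``inner'' indices gives, with $A_n=|\rho|^r\mathbf 1_{|\cdot|<n}$ and $B_n=|\rho|^{\ell-r}\mathbf 1_{|\cdot|<n}$, the bound $\|f_{\ell,n}\otimes_r f_{\ell,n}\|^2\le n^{-1}\|A_n\ast B_n\|_{\ell^2(\ZZ)}^2$. The integrabilities you invoke ($A_n\in\ell^{\ell/r}$, $B_n\in\ell^{\ell/(\ell-r)}$, conjugate exponents) feed Young's inequality with target exponent $s$ satisfying $1+\tfrac1s=\tfrac r\ell+\tfrac{\ell-r}\ell=1$, i.e.\ $s=\infty$: you control $\|A_n\ast B_n\|_\infty$, and combining with $\|A_n\ast B_n\|_{\ell^1}=\|A_n\|_{\ell^1}\|B_n\|_{\ell^1}\le Cn$ (by H\"older against the window) yields only $n^{-1}\|A_n\ast B_n\|_{\ell^2}^2=O(1)$, not $o(1)$. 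Indeed, for $\rho(m)\sim|m|^{-\alpha}$ with $1<\alpha\ell<3/2$ one has $\|A_n\ast B_n\|_{\ell^2}^2\sim n^{3-2\alpha\ell}\to\infty$, so no fixed Young/H\"older exponent count can close the estimate; the decay rate degenerates as $\alpha\ell\downarrow1$. The missing idea is the $\varepsilon$-splitting of the classical proof: fix $N$ with $\sum_{|m|>N}|\rho(m)|^\ell<\varepsilon$, decompose $A_n$ and $B_n$ into their restrictions to lags $\le N$ and $>N$, bound the ``small-lag'' contribution by $C_N/n\to0$, and bound every term involving a tail by a power of $\varepsilon$ using exactly the H\"older/Young exponents you identified. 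With that insertion the contraction norms vanish, the fourth-moment criterion applies, and the rest of your argument is sound.
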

{\blue The Breuer-Major theorem has far-reaching applications in many different areas, such as mathematical statistics, signal processing or geometry of random nodal sets, see e.g. \cite{Doukhan,NourdinPeccatiRossi,Pipiras-Taqqu,Tudor} and references therein. It has been generalized and refined in various aspects \cite{CNN,CS,NN,NP,NPP}. }

\medskip

{\black Now let $\sigma_n^2  := \Var (F_n)$ and $V_n := F_n/\sqrt{\Var(F_n)}$. The aim of the present paper is develop a novel method for obtaining explicit upper bounds on the sequence
$$
d_{\rm TV}(V_n , N(0,1)) := \sup_{A\in \mathcal{B}(\R)} \left| \mathbb{P}(V_n \in A) - \mathbb{P}(N(0,1) \in A)\right|, \quad n\geq 1,
$$
{\green where $\mathcal B(\R)$ is the Borel $\sigma$-algebra on $\RR$}, under {\it minimal regularity assumptions} on the function $\ph$. {\magenta Our strategy for doing so is to combine the {\it Malliavin-Stein method for probabilistic approximations} (as described in Section \ref{ss:ms} below) and the powerful {\it Gebelein's inequality} for correlation of Gaussian functionals (see \cite{G,V}, as well as Section \ref{s:pg}, for a self-contained proof), as applied to non-linear transformations of correlated Gaussian sequences. To the best of our knowledge, our use of Gebelein's inequality is new: it is reasonable to expect that the content of the present work might constitute the blueprint for further applications of such general a bound to probabilistic approximations in a Gaussian setting.}

\smallskip

We recall that, for every $n$, the quantity $d_{\rm TV}(V_n , N(0,1))$ corresponds to the {\it total variation distance} between the distributions of $V_n$ and $N(0,1)$ --- see e.g. \cite[Appendix C]{NP}, and the references therein, for a discussion of the properties of $d_{\rm TV}$. Any statement yielding the existence of an explicit numerical sequence $\{\alpha_n\}$ such that $\alpha_n\to 0$ and $d(V_n , N(0,1))\leq \alpha_n$, {\black for some distance $d$}, is called a {\it quantitative Breuer-Major Theorem}.} 

\smallskip

{\black One of the first quantitative Breuer-Major theorems is contained in the work by Nourdin, Peccati and Podolskij \cite{NPP}  --- see, in particular, \cite[Cor.~2.4]{NPP}, where the focus is on the {\black Kolmogorov and 1-Wasserstein distances} and on the case where $\ph$ is a Hermite polynomial of order $q$. The rates obtained in \cite{NPP} are, in general, not optimal. }{\black We stress that, according to \cite[Corollary 2.4]{NPP}, the convergence in distribution in Theorem \ref{BMtheorem} always takes place in the sense of the Kolmogorov and 1-Wasserstein distances. }

\medskip

Determining whether the Breuer-Major CLT holds in the topology of the distance $d_{\rm TV}$ is a much more delicate matter, since -- unlike convergence in the Kolmogorov or 1-Wasserstein distances -- convergence in total variation cannot take place in full generality, and requires extra regularity assumptions on $\varphi$. Our specific aim is therefore to tackle the following problem:
 
 \bigskip
 
 \noindent {\bf Problem P}: Letting the notation and assumptions of Theorem \ref{BMtheorem} prevail, find  conditions on $\varphi$ and $\rho$ in order to have that $$d_{\rm TV}(F_n/\sqrt{\Var(F_n)} , N(0,1)) \to 0\quad\mbox{as $n\to\infty$}.$$

\medskip

To appreciate the subtlety of Problem P, one should recall the following two facts:
\begin{itemize}

\item[(i)] according to the main findings in \cite{NourdinPoly}, if $\varphi$ is a polynomial, then the convergence in Theorem \ref{BMtheorem} always takes place in the sense of total variation;

\item[(ii)] on the other hand, if one considers independent $X_k\sim  N(0,1)$, then it is immediate to build counterexamples, for instance by setting $\varphi(x)={\rm sign}(x)$ --- in which case the assumptions of Theorem \ref{BMtheorem} are satisfied, but $d_{\rm TV}(F_n/\sqrt{\Var(F_n)} , N(0,1))=1$ for all $n$. 

\end{itemize} 

{\magenta As anticipated, the content of Points (i) and (ii) suggests that there exists a {\it minimal} amount of regularity for the function $\varphi$, below which convergence in total variation in the Breuer-Major Theorem ceases to take place. Exactly locating such a threshold is the ultimate goal of the line of research inaugurated by the present paper.} 

\smallskip

As already discussed, in what follows we will be concerned with {\black upper bounds on} the rate of convergence in the Breuer-Major theorem when the function $\ph$ possibly displays an {\it infinite} Hermite expansion \eqref{e:he}, and {\black belongs to the Sobolev space $\mathbb{D}^{1,4}$ --- where we adopted the usual notation $\mathbb D^{p,q}$ in order to indicate the Sobolev space of {\black those} random variables on {\black a Gaussian} space that are $p$ times differentiable in the sense of Malliavin, and whose Malliavin derivative is $q$-integrable (see Section \ref{s:2} for a precise definition).  We consider that the property of belonging to some space $\ph\in\mathbb{D}^{1,q}$, $q\geq 1$, is {\magenta somehow unavoidable}, in the sense that it is the least requirement on $\varphi$ that allows one to directly apply the Malliavin-Stein method outlined in Section 2.}

\medskip

The following statement is the main result of the paper:

\begin{theorem}\label{t:main}
Let $X=(X_k)_{k\in\ZZ}$ be a centered stationary Gaussian sequence with covariance function $\EE[X_kX_j]=\rho(k-j)$ satisfying $\rho(0)=1$, and let $\ph\in \mathbb{D}^{1,4}\subset L^2(\RR,\ga)$
be such that $\EE[\varphi(X_1)]=\int_\R \varphi d\gamma=0$. 
Let $F_n$ be given by \eqref{e:F_n} and set $\sigma_n^2=\Var(F_n)$ and $V_n = F_n/\sigma_n$. 
Then, for a finite constant $C(\ph)$, whose {\black explicit} value is given in \eqref{e:C(ph)} below:
\begin{enumerate}
\item[(i)] For every $n$,
\begin{align}\label{e:1}
\dtv(V_n, N(0,1)) \le \frac{4C(\ph)}{\sigma_n^2} n^{-\frac 1 2} \left(\sum_{|k|<n} |\rho(k)| \right)^{\frac{3}{2}}.
\end{align}
\item[(ii)]  If $\ph$ is symmetric (or, more generally, $2$-sparse, as defined in Section \ref{s:sparse}) then, for all $b\in[1,2]$ and all $n$,
\begin{eqnarray}
&& \dtv(V_n, N(0,1)) \label{e:2} \\
&& \quad\quad\quad\quad\le \frac{4C(\ph)}{\sigma_n^2}  n^{-(\frac{1}{b}-\frac{1}{2})} \left(\sum_{|k|<n} |\rho(k)|^2\right)^\frac{1}{2} \left(\sum_{|k|<n} |\rho(k)|^{b}\right)^{1\over b} .\notag
\end{eqnarray}
\end{enumerate}
\end{theorem}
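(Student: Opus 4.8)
The plan is to feed the Malliavin--Stein method with a second-moment estimate obtained from Gebelein's inequality. Realize $X$ through an isonormal Gaussian process over a Hilbert space $\cH$ with $\langle e_k,e_j\rangle_\cH=\rho(k-j)$, so that $X_k=X(e_k)$. Since $\ph\in\mathbb D^{1,4}\subset\mathbb D^{1,2}$, both $F_n$ and $V_n=F_n/\sigma_n$ belong to $\mathbb D^{1,2}$, and the standard total-variation Malliavin--Stein bound gives
$$\dtv(V_n,N(0,1))\le 2\sqrt{\Var\!\big(\langle DV_n,-DL^{-1}V_n\rangle_\cH\big)}=\frac{2}{\sigma_n^2}\sqrt{\Var(T_n)},\qquad T_n:=\langle DF_n,-DL^{-1}F_n\rangle_\cH.$$
Writing $\ph_\star:=\sum_{\ell\ge d}\tfrac{a_\ell}{\ell}H_\ell$ (so that $-L^{-1}\ph(X_k)=\ph_\star(X_k)$) and using $D\ph(X_k)=\ph'(X_k)e_k$, I obtain the explicit expression
$$T_n=\frac1n\sum_{k,j=1}^n\rho(k-j)\,\ph'(X_k)\,\ph_\star'(X_j),$$
whose mean is readily checked to equal $\sigma_n^2$ via the Hermite expansions of $\ph'$ and $\ph_\star'$. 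Everything thus reduces to bounding $\Var(T_n)$.

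Expanding the variance produces a fourfold sum
$$\Var(T_n)=\frac1{n^2}\sum_{k,j,k',j'=1}^n\rho(k-j)\rho(k'-j')\,\Cov\!\big(\ph'(X_k)\ph_\star'(X_j),\,\ph'(X_{k'})\ph_\star'(X_{j'})\big),$$
and the heart of the argument is to control each covariance. I view $\ph'(X_k)\ph_\star'(X_j)$ as a square-integrable functional of the Gaussian vector $(X_k,X_j)$ and apply Gebelein's inequality, which bounds the covariance by the maximal correlation coefficient between $(X_k,X_j)$ and $(X_{k'},X_{j'})$ times the two standard deviations. The standard deviations are dominated, uniformly in the indices, by $\sqrt{\EE[\ph'(X_1)^4]}$ and $\sqrt{\EE[\ph_\star'(X_1)^4]}$ through Cauchy--Schwarz: this is precisely where the assumption $\ph\in\mathbb D^{1,4}$ enters, and these fourth moments assemble into the explicit constant $C(\ph)$ of \eqref{e:C(ph)}. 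The maximal correlation is then estimated by the cross-correlations, yielding a bound of the form $|\Cov(\cdots)|\le C(\ph)\big(|\rho(k-k')|+|\rho(k-j')|+|\rho(j-k')|+|\rho(j-j')|\big)$.

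Inserting this estimate and using the translation invariance of the sums, each of the four resulting terms is handled identically: summing the two internal correlations $\rho(k-j),\rho(k'-j')$ contributes a factor $\sum_{|m|<n}|\rho(m)|$ each, and summing the surviving cross-correlation over the last free index contributes a further such factor together with an overall factor $n$; after dividing by $n^2$ one obtains $\Var(T_n)\le 4C(\ph)^2 n^{-1}\big(\sum_{|k|<n}|\rho(k)|\big)^3$, which is exactly (i). For (ii), the point is that $2$-sparsity forces $\ph'$ and $\ph_\star'$ to have Hermite support in a common parity class; in the diagram expansion of the covariance, the cut separating $\{X_k,X_j\}$ from $\{X_{k'},X_{j'}\}$ must then carry an \emph{even} (hence at least two) number of edges, so one may retain two cross-correlation factors rather than one. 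Keeping both and splitting the summation by Hölder's inequality with exponent $b\in[1,2]$ converts one factor $\sum|\rho|$ into the pair $(\sum|\rho|^2)^{1/2}(\sum|\rho|^b)^{1/b}$ and produces the gain $n^{-(1/b-1/2)}$, giving (ii).

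The main obstacle is the covariance estimate of the second paragraph: turning Gebelein's inequality into a clean, index-uniform bound for the covariance of \emph{products} of correlated Gaussian functionals. Two difficulties arise. First, the Gaussian vector $(X_k,X_j)$ degenerates when $k=j$ (and similarly for $k'=j'$), where the maximal-correlation bound must be replaced by the single-variable version and the corresponding diagonal terms isolated. Second, one must extract the correct \emph{power} of the cross-correlation --- one power for (i) but two for (ii) --- which cannot be read off from a black-box application of Gebelein and instead requires tracking the parity/sparsity structure through the chaos decomposition. Controlling the maximal correlation of the pair-vectors by the individual cross-correlations, uniformly over all index configurations, is the technical core.
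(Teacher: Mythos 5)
Your proposal follows essentially the same route as the paper: the Malliavin--Stein reduction to $\Var\big(\langle DV_n,-DL^{-1}V_n\rangle\big)$, Gebelein's inequality applied to the covariances of the products $\ph'(X_k)\ph_1(X_j)$ (your $\ph_\star'$ is the paper's shift $\ph_1$), the observation that $2$-sparsity forces Hermite rank at least $2$ and hence a quadratic gain in the cross-correlation, and the Young/H\"older convolution estimates at the end. The two difficulties you flag are resolved in the paper exactly as you anticipate: Gebelein's inequality is stated for arbitrary Hilbert subspaces and applied with $\mathfrak H_1=\mathrm{span}(e_i,e_j)$, $\mathfrak H_2=\mathrm{span}(e_k,e_\ell)$ (so the degenerate case $i=j$ causes no trouble), and a short lemma based on the product formula shows that $\ph'(W(h))\ph_1(W(g))-\EE[\cdot]$ has Hermite rank at least $2$ for $2$-sparse $\ph$.
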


\begin{remark}\label{r:r} (1) {\black  {\black Recall that, according e.g. to the terminology adopted in \cite[Chapter 9]{NPbook}, a numerical sequence $\alpha_n\downarrow 0$ is said to provide an {\it optimal rate} (for $\dtv(V_n, N(0,1))$), whenever there exist non-zero finite constants $k<K$ such that
$$
k \alpha_n \leq \dtv(V_n, N(0,1)) \leq K\alpha_n,
$$
for $n$ large enough. }The rate provided in Theorem \ref{t:main}-(i) for functions $\varphi$ with Hermite rank 1 is optimal in this sense. Indeed, in the trivial case where $\rho(j) = 0$ for every $j\neq 0$ and using e.g. the {\it reverse Berry-Esseen inequality} from \cite{BH}, it is easy to build a centered smooth function $\varphi$ with Hermite rank 1 and such that $$ d_{\rm TV}(V_n, N(0,1)) \geq C n^{-1/2},$$ for some absolute constant $C>0$.

(2) For a function $\varphi$ having Hermite rank equal to 2, the sufficient condition for asymptotic normality in Theorem \ref{BMtheorem} is that $\rho\in\ell^2(\ZZ)$. Theorem \ref{t:main}-(ii) refines such a result by yielding that, in the case of a symmetric $\varphi$, convergence in total variation takes place whenever $\rho\in \ell^b$ ($\subset \ell^2$), for some $b\in[1,2)$.   We also observe that Theorem \ref{t:main}-(ii) yields an upper bound on $\dtv(V_n, N(0,1))$, {\black explicitly} interpolating all the cases $\rho\in\ell^b(\ZZ)$, for $1\le b<2$. 

(3) {\black By inspection of our forthcoming proof, it will be clear that our techniques do not allow us to deal with the case of a general function $\varphi \in \mathbb D^{1,4}$ having Hermite rank equal to 2. This implies that the requirement that $\varphi$ is 2-sparse cannot easily be removed.}}
\end{remark}


We will now compare our findings with further results in the literature. 

\subsection{Discussion}

In the case where $\ph$ has a possibly infinite Hermite expansion \eqref{e:he}, and under some extra smoothness assumptions, Nourdin, Peccati and Reinert \cite{NPR}, Nualart and Zhou \cite{NZ} and Vidotto \cite{Vidotto} obtained total variation error bounds that are better than those derived in \cite{NPP}. {\black The rates of convergence deduced in \cite{NPP} and \cite{ NPR, NZ, Vidotto} (that are sometimes optimal, and sometimes not) are all obtained via some variation of the {Malliavin-Stein approach} described in Section \ref{ss:ms}.} 

\smallskip

In \cite{NZ} (the closest reference to the present note), the following general quantitative result is proved (see \cite[Th.~4.2 and Th.~4.3(v)]{NZ}):  {\black as $n\to\infty$, one has that $$d_{\rm TV}(V_n , N(0,1)) = O(n^{-1/2}),$$ provided that

\begin{itemize}

\item[(a)] either $\ph$ has Hermite rank 1 ($d=1$ in \eqref{e:he}), $\ph\in\mathbb D^{2,4}$ and $\rho\in \ell^{1}(\ZZ)$, or

\item[(b)] $\ph$ has Hermite rank 2 ($d=2$ in \eqref{e:he}), $\ph\in\mathbb D^{6,8}$ and $\rho\in \ell^{\frac 3 2}(\ZZ)\subset \ell^2(\ZZ)$.

\end{itemize}

}

{\black The regularity assumptions on $\varphi$ required at Points (a) and (b) above are clearly more restrictive than ours. On the other hand, disregarding the regularity of $\varphi$, the upper bound of the order $n^{-1/2}$ obtained in \cite{NZ} is optimal for the set of assumptions at Point (a) and (b) above}. The optimality for Point (a) follows from the same argument used in Remark \ref{r:r}-(1). Similarly, the order $n^{-1/2}$ under the set of assumptions at Point (b) cannot be improved in general, since it coincides with the {\it third/fourth cumulant barrier} for the total variation distance, between {\black the laws of} a sequence of random variables in a fixed chaos and the standard normal distribution. {\black Such a result was established in full generality in \cite[Theorem 11.2]{NP}, and is presented in the next proposition in the simple case of polynomials of order 2.  }   {\green Here and after, $a(n)\asymp b(n)$ means that the ratio $a(n)/b(n)$ is bounded from above and below by positive finite constants.}

\begin{proposition}\cite[Proposition 4.2]{NP}\label{p:pr}
Let $F_n$ be given by \eqref{e:F_n} with $\ph=H_2$. Set $V_n = F_n/\sqrt{\Var(F_n)}$.  Then,
\begin{align*}
\dtv(V_n,N(0,1))\asymp \frac{1}{\sqrt{n}} \left(\sum_{|k|<n} |\rho(k)|^{\frac 3 2} \right)^2
\end{align*}
as $n\to \infty$.  In particular, $\dtv(V_n,N(0,1))\asymp \frac{1}{\sqrt{n}}$ if $\rho\in\ell^{3\over 2}(\ZZ)$.
\end{proposition}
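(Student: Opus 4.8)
The plan is to use that, with $\ph=H_2$, every $F_n$ belongs to the second Wiener chaos, and to reduce the statement to the asymptotics of two cumulants. Specializing the general third/fourth cumulant barrier \cite[Theorem 11.2]{NP} to the second chaos, one gets two-sided estimates
\[
\dtv(V_n,N(0,1))\asymp \max\bigl(|\kappa_3(V_n)|,\,|\kappa_4(V_n)|\bigr),
\]
where $\kappa_3,\kappa_4$ denote the third and fourth cumulants, so it suffices to evaluate these. First I would record the closed form available in the second chaos: since $H_2(x)=x^2-1$, one has $\sum_{k=1}^nH_2(X_k)=Z^\top R_nZ-n$ in law, where $R_n=(\rho(i-j))_{1\le i,j\le n}$ is the (positive semidefinite) Toeplitz covariance matrix and $Z\sim N(0,\mathrm{Id})$; the standard formula for the cumulants of the Gaussian quadratic form $Z^\top R_nZ$ then gives, for $p\ge2$,
\[
\kappa_p(F_n)=\frac{2^{p-1}(p-1)!}{n^{p/2}}\,\mathrm{Tr}(R_n^p).
\]
In particular $\sigma_n^2=\kappa_2(F_n)=\tfrac{2}{n}\,\mathrm{Tr}(R_n^2)\to 2\sum_{k\in\ZZ}\rho(k)^2\in(0,\infty)$ whenever $\rho\in\ell^2(\ZZ)$, so the normalization by $\sigma_n^p$ does not affect any order of magnitude and I may work with $\kappa_p(F_n)$ throughout.

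The heart of the proof is the third cumulant. Expanding the trace and setting $a=i-j$, $b=j-k$ (so that $k-i=-(a+b)$) shows that $\mathrm{Tr}(R_n^3)$ equals $n$ times the truncated symmetric convolution form $\sum_{a+b+c=0}\rho(a)\rho(b)\rho(c)$, up to boundary corrections; here all three increments are automatically constrained to $\{|\cdot|<n\}$ because $i,j,k\in\{1,\dots,n\}$. For the upper bound I would apply the sharp three-function Young convolution inequality with common exponent $3/2$ (note $3\cdot\tfrac{2}{3}=2$) to the sequence $|\rho|\,\mathbf 1_{\{|\cdot|<n\}}$, obtaining
\[
\Bigl|\sum_{a+b+c=0}\rho(a)\rho(b)\rho(c)\Bigr|\le \Bigl(\sum_{|k|<n}|\rho(k)|^{3/2}\Bigr)^{2},
\]
and hence $|\kappa_3(F_n)|\lesssim n^{-1/2}\bigl(\sum_{|k|<n}|\rho(k)|^{3/2}\bigr)^2$, which is the desired order.

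For the matching lower bound I would exploit the positive-definiteness of $\rho$. Introducing the nonnegative spectral density $f$ with $\rho(k)=\int_{-\pi}^\pi e^{ik\theta}f(\theta)\,d\theta$, Fourier inversion turns the convolution form into $(2\pi)^2\int_{-\pi}^\pi f^3\,d\theta\ge0$, while the Hausdorff--Young inequality gives $\|\rho\|_{\ell^{3/2}}\lesssim\|f\|_{L^3}$; combining the two yields $\sum_{a+b+c=0}\rho(a)\rho(b)\rho(c)\asymp\|\rho\|_{\ell^{3/2}}^3$ and therefore $|\kappa_3(F_n)|\asymp n^{-1/2}\bigl(\sum_{|k|<n}|\rho(k)|^{3/2}\bigr)^2$. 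It then remains to rule out $\kappa_4$ as the dominant term, and this part is clean: since $R_n\succeq0$, $\mathrm{Tr}(R_n^4)\le\|R_n\|_{\mathrm{op}}\,\mathrm{Tr}(R_n^3)$, and the Toeplitz bound $\|R_n\|_{\mathrm{op}}\le\sum_{|k|<n}|\rho(k)|\le (2n)^{1/2}\|\rho\|_{\ell^2}=O(\sqrt n)$ gives $\kappa_4(F_n)\lesssim n^{-1/2}\kappa_3(F_n)=O(\kappa_3(F_n))$. Hence the maximum is attained by $\kappa_3$, and the claimed $\asymp$ follows, the ``in particular'' being the special case where $\sum_{|k|<n}|\rho(k)|^{3/2}$ converges. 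I expect the genuine difficulty to lie in the lower bound for $\kappa_3$: whereas the upper bound is a direct application of sharp Young, reversing the inequality really uses positive-definiteness (through $\int f^3\ge0$ and Hausdorff--Young), and the truncation has to be handled with care in the regime $\rho\notin\ell^{3/2}(\ZZ)$, where $f$ need not belong to $L^3$ and the boundary corrections in the reduction of $\mathrm{Tr}(R_n^3)$ can no longer be discarded for free.
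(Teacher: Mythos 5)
First, a remark on scope: the paper does not prove this proposition at all --- it is quoted verbatim from \cite[Proposition 4.2]{NP}, so there is no internal proof to compare against. Your overall strategy is nevertheless the correct one and is the one used in \cite{NP}: since $F_n$ lives in the second Wiener chaos, the optimal fourth moment theorem reduces everything to $\max(|\kappa_3(V_n)|,|\kappa_4(V_n)|)$, and the cumulant formula $\kappa_p(F_n)=2^{p-1}(p-1)!\,n^{-p/2}\,\mathrm{Tr}(R_n^p)$ for the quadratic form $Z^\top R_n Z-n$ is exactly right. Your upper bound on $\kappa_3$ via the trilinear Young inequality with exponents $(3/2,3/2,3/2)$ is correct, and your domination of $\kappa_4$ is correct in substance (though the intermediate claim $\kappa_4\lesssim n^{-1/2}\kappa_3$ is off: $\mathrm{Tr}(R_n^4)\le\|R_n\|_{op}\mathrm{Tr}(R_n^3)$ with $\|R_n\|_{op}=O(\sqrt n)$ gives $\kappa_4=\tfrac{48}{n^2}\mathrm{Tr}(R_n^4)\lesssim n^{-3/2}\mathrm{Tr}(R_n^3)\asymp\kappa_3$, i.e. only $\kappa_4=O(\kappa_3)$, which is still enough).

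The genuine gap is the lower bound on $\kappa_3$, and the step you rely on is false. Hausdorff--Young for the map $f\mapsto(\hat f(k))_k$ holds only from $L^p$ to $\ell^{p'}$ for $1\le p\le 2$; the inequality you invoke, $\|\rho\|_{\ell^{3/2}}\lesssim\|f\|_{L^3}$, corresponds to $p=3>2$ and fails even for nonnegative spectral densities. (Take $f=|g|^2/N$ with $g=\sum_{j\le N}\epsilon_j e^{in_j\theta}$ and $\{n_j\}$ a Sidon set: then $\|f\|_{L^3}\asymp 1$ while the autocorrelation coefficients give $\|\rho\|_{\ell^{3/2}}\asymp N^{1/3}$.) What Hausdorff--Young actually gives is the reverse bound $\|f\|_{L^3}\lesssim\|\rho\|_{\ell^{3/2}}$, which combined with $\sum_{a+b+c=0}\rho(a)\rho(b)\rho(c)=(2\pi)^2\int f^3$ only reproduces your upper bound. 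So the chain $(2\pi)^2\int f^3\gtrsim\|\rho\|_{\ell^{3/2}}^3$ is unproved, and with it the matching lower bound $\mathrm{Tr}(R_n^3)\gtrsim n\big(\sum_{|k|<n}|\rho(k)|^{3/2}\big)^2$ --- precisely the part you yourself flag as the ``genuine difficulty,'' together with the boundary corrections in the divergent regime $\rho\notin\ell^{3/2}(\ZZ)$, which you also leave unresolved. Note that in the regime $\rho\in\ell^{3/2}(\ZZ)$ (the ``in particular'' clause) the lower bound can be rescued without any Fourier analysis: since $R_n\succeq 0$, the power-mean inequality on eigenvalues gives $\mathrm{Tr}(R_n^3)\ge\mathrm{Tr}(R_n^2)^{3/2}n^{-1/2}\gtrsim n$, whence $\kappa_3\gtrsim n^{-1/2}$. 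But for the full two-sided statement when $\sum_{|k|<n}|\rho(k)|^{3/2}\to\infty$ your argument does not close, and you should consult the actual proof in \cite{NP} for the additional structure used there.
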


{\blue One interesting subordinating function $\varphi$ entering the scope of our paper is  $\ph(x)=|x|-\sqrt{2/\pi}$. The Breuer-Major CLT associated with such a mapping has been recently applied in a geometric setting in \cite{CNN}, where $\ph$ {\black arose in the approximation} of the length of a smooth regularization of the sample paths of a Gaussian process with stationary increments.  {\blue Note that $\varphi \in \mathbb D^{1,q}$ for any $q\ge 1$, but $\ph\notin  \mathbb D^{2,2}$.}  Also, $\varphi$ has an infinite expansion \eqref{e:he} with Hermite rank $d=2$. Such a case is not covered by the findings of \cite{NZ} or  \cite{ NPP, NPR,  V} (due to the lack of sufficient regularity for the function $\varphi$), {\black and enters indeed the framework of our main result, stated in Theorem \ref{t:main}. The case of such a mapping is also covered by the recent reference \cite{nk}, where convergence in total variation is deduced for a class much smaller than $\mathbb{D}^{1,4}$, containing however $\ph(x)=|x|-\sqrt{2/\pi}$.  }}

{\black The higher regularity requirement for $\varphi$ which is necessary in \cite{NZ} stems from the method, {\black used therein}, of applying integration by parts several times. On the other hand, our approach requires that we only perform one integration by parts in the Malliavin-Stein approach, since our final estimate makes use of the intrinsic correlation bound given by Gebelein's inequality. {\black The use of Gebelein's inequality, which is the main technological breakthrough of the present paper, requires much less regularity on $\ph$}. 

\medskip

{\magenta Although the focus of our paper is on finding minimal regularity assumptions on $\varphi$ for having convergence in total variation in the Breuer-Major Theorem}, a natural question one might ask is {\black whether the rates of convergence implied by our bounds are optimal}. In view of Proposition \ref{p:pr}, applying the upper bound in Theorem \ref{t:main}-(ii) to the case $\ph=H_2$ (and $\rho\in\ell^b(\ZZ)$, for some $1\le b<2$), one obtains a rate which is not optimal. The already mentioned reference \cite{nk} shows that our results are, in general, not optimal also for the case $\ph(x)=|x|-\sqrt{2/\pi}$. Further discussions around this problem are gathered at the end of the paper --- see Section \ref{s:concl}.  

\medskip

The present paper is organised as follows.  We start by reviewing some basic elements of stochastic analysis on the Wiener space and of the Malliavin-Stein approach. Then we  introduce the new ingredient, Gebelein's inequality for correlated isonormal Gaussian processes, in Section \ref{s:2}.  We apply  a Gebelein-Malliavin-Stein bound to prove our main theorem in Section \ref{s:3}. A discussion on optimality is provided in Section \ref{s:concl}, thus concluding the paper.

\medskip

{\black Every random object considered below is defined on a common probability space $(\Omega, \mathcal F, \mathbb P)$, with $\EE$ denoting mathematical expectation with respect to $\PP$.}

\section{Preliminaries}\label{s:2}

\subsection{Stochastic analysis on the Wiener space}

The content of this subsection can be found in \cite{NPbook} or \cite{NN}. An {\it isonormal Gaussian process} $\{W(h) : h\in\frak H\}$ is a family of centered Gaussian random variables indexed by a real separable Hilbert space $\frak H$ such that the covariance satisfies 
\begin{align*}
\EE[W(g)W(h)] = \langle g,h\rangle_\frak{H}.
\end{align*}
Let $F$ be a square-integrable functional of an isonormal Gaussian process $W$. Then, $F$ has a unique Wiener-It\^o chaos expansion
\begin{align}\label{e:chaosexp}
F=\EE[F] + \sum_{k\ge 1} I_k(f_k)  \mbox{ in } L^2(\Omega),
\end{align}
where $f_k\in \frak H^{\otimes k}$ is a symmetric kernel, and $I_k(f_k)$ is the $k$-th multiple {\green Wiener-It\^o} integral, $k\ge 1$.
By convention we write $I_0(f_0)=f_0=\EE[F]$.
 By orthogonality between multiple integrals of different orders, we have $\EE[F^2]=\sum_{k\ge 0} k! \norm{f_k}_{\frak H^{\otimes k}}^2$.   Let $f:\RR^n\to\RR$ be {\black of class} $C^\infty$, and such that all its partial derivatives have at most polynomial growth. Consider a smooth functional of the form $F=f(W(h_1),...,W(h_n))$ with $h_1,..,h_n\in\frak{H}$. We define the {\it Malliavin derivative} of $F$ as 
\begin{align*}
D F= \sum_{i=1}^n  \partial_i  f(W(h_1),...,W(h_n)) h_i.
\end{align*}
The set of smooth functionals $F$ introduced above is dense in $L^q(\Omega)$, {\black $q\geq 1$}, and the operator $D$ is closable. Therefore, $D$ can be extended to 
$\mathbb{D}^{1,q}$, the set of $F$ such that there exists a sequence of smooth functionals $(F_n)_{n\ge 1}$ satisfying $\EE[|F_n-F|^q]\to 0$ and $\EE[\norm{DF_n - \eta}_\frak{H}^q]\to 0$, {\black for} some $\eta\in L^q(\Omega, \frak{H})$, {\black that we rewrite as} $\eta:=DF$. One defines similarly $D^p$ and $\mathbb D^{p,q}$. When $q=2$, these spaces are  Hilbert spaces and  we have the following characterization in terms of the chaos expansion \eqref{e:chaosexp}: $$\mathbb D^{p,2} = \{F\in L^2(\Omega): \sum_{k\ge p} k^{p} k! \norm{f_k}^2_{\frak H^{\otimes k}}<\infty \}.$$
  The adjoint of $D$, customarily called the {\it divergence operator} or the {\it Skorohod integral}, is denoted by $\de$ and satisfies the duality formula,
\begin{align}\label{duality}
\EE[\de(u)F] = \EE[\langle u, DF\rangle_\frak{H}]
\end{align} 
for all $F\in\mathbb D^{1,2}$, whenever $u: \Omega\to \frak H$ is in the domain of $\de$. The {\it Ornstein-Uhlenbeck semigroup }$(P_t)_{t\ge 0}$ is defined by Mehler's formula for all $F\in L^1(\Omega)$ by
\begin{align*}
P_t F = \EE'[F(e^{-t}W+\sqrt{1-e^{-2t}}W')],
\end{align*} 
where $W'$ is an independent copy of $W$ and $\EE'$ denotes the expectation with respect to $W'$. For $F\in L^2(\Omega)$ given by the chaos expansion \eqref{e:chaosexp}, the Ornstein-Uhlenbeck semigroup takes the form
\begin{align*}
P_t F = \sum_{k\ge 0} e^{-kt} I_k(f_k).
\end{align*}
The generator of $(P_t)_{t\ge 0}$ is denoted by $L$ and acts on the chaos expansion in a simple way,
\begin{align*}
-LF = \sum_{k\ge 1} k I_k(f_k),
\end{align*}
with $\dom L= \{F: \sum_{k\ge 1} k^2 k! \norm{f_k}_{\frak H^{\otimes k}}^2<\infty\}$. The pseudo-inverse of $L$ is defined by
\begin{align*}
-L^{-1} F=\sum_{k\ge 1} \frac{1}{k}I_k(f_k) 
\end{align*}
for all $F\in L^2(\Omega)$. We have $LL^{-1}F=F-\EE[F]$ for all $F\in L^2(\Omega)$. The key identity that links the {\black objects defined above} is  $L=-\de D$; in particular, we have $-DL^{-1}F\in \dom(\de)$
for all $F\in L^2(\Omega)$. 

We end this subsection with {\black a fundamental} product formula for multiple integrals.  
\begin{proposition}[Product formula]\label{f:product} Let $p,q$ be non-negative integers. Let $f\in \frak H^{\otimes p}$ and $g\in \frak H^{\otimes q}$ be symmetric kernels. We have
\begin{align*}
I_p(f)I_q(g) = \sum_{r=0}^{p\wedge q}r! {p\choose r} {q \choose r} I_{p+q-2r}(f\wt\otimes_r g)
\end{align*}
where $f\wt\otimes_r g$ is the symmetrized $r$-th contraction of $f$ and $g$, see \cite[p.~208]{NPbook} for a definition.  
\end{proposition}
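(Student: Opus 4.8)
The plan is to reduce the identity, via polarization together with the $L^2$-continuity of the operators $I_k$, to the case of rank-one symmetric kernels, and then to establish that reduced case by comparing the generating functions (Wick exponentials) of the two sides. First I would note that both members of the formula are separately symmetric and multilinear in the tensor factors of $f$ and of $g$; since $f\mapsto I_p(f)$ and $g\mapsto I_q(g)$ are continuous on the symmetric tensors of $\frak H^{\otimes p}$, $\frak H^{\otimes q}$ into $L^2(\Omega)$, and since symmetric tensors are the closed linear span of the ``rank-one'' tensors $h^{\otimes p}$, $h\in\frak H$ (polarization), it suffices to prove the formula for $f=h^{\otimes p}$ and $g=k^{\otimes q}$, with $h,k\in\frak H$ arbitrary. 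For such kernels one has the elementary contraction rule $h^{\otimes p}\wt\otimes_r k^{\otimes q}=\langle h,k\rangle_{\frak H}^{\,r}\, h^{\otimes(p-r)}\wt\otimes k^{\otimes(q-r)}$, so the target reduces to
\[
I_p(h^{\otimes p})\,I_q(k^{\otimes q})=\sum_{r=0}^{p\wedge q} r!\binom{p}{r}\binom{q}{r}\,\langle h,k\rangle_{\frak H}^{\,r}\, I_{p+q-2r}\big(h^{\otimes(p-r)}\wt\otimes k^{\otimes(q-r)}\big).
\]

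Second, I would encode all the integrals $I_m(u^{\otimes m})$ into the Wick exponential $\mathcal E(W(u)):=\exp\big(W(u)-\tfrac12\norm{u}_{\frak H}^2\big)=\sum_{m\ge0}\tfrac{1}{m!}I_m(u^{\otimes m})$, and use the multiplicative relation $\mathcal E(W(u))\,\mathcal E(W(v))=e^{\langle u,v\rangle_{\frak H}}\,\mathcal E(W(u+v))$, which is immediate from the definition and the bilinearity of the covariance. Taking $u=th$ and $v=sk$ for small real parameters $t,s$, the left-hand side expands as $\big(\sum_p \tfrac{t^p}{p!}I_p(h^{\otimes p})\big)\big(\sum_q \tfrac{s^q}{q!}I_q(k^{\otimes q})\big)$, whereas the right-hand side equals $e^{ts\langle h,k\rangle_{\frak H}}\sum_m \tfrac{1}{m!}I_m\big((th+sk)^{\otimes m}\big)$. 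Expanding $(th+sk)^{\otimes m}=\sum_{a+b=m}\binom{m}{a}t^a s^b\, h^{\otimes a}\wt\otimes k^{\otimes b}$ and $e^{ts\langle h,k\rangle_{\frak H}}=\sum_r \tfrac{(ts)^r}{r!}\langle h,k\rangle_{\frak H}^{\,r}$, I would then identify the coefficient of $t^p s^q$ on both sides.

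The combinatorial content sits exactly in this coefficient comparison, but the generating-function formulation collapses it to a one-line Vandermonde-type identity: picking $a=p-r$, $b=q-r$ and $m=p+q-2r$, the factorial weights combine as
\[
\frac{p!\,q!}{r!\,m!}\binom{m}{p-r}=\frac{p!\,q!}{r!\,(p-r)!\,(q-r)!}=r!\binom{p}{r}\binom{q}{r},
\]
which is precisely the coefficient in the reduced identity, while the factor $\langle h,k\rangle_{\frak H}^{\,r}$ recombines with the symmetrized tensor to produce the contraction $h^{\otimes p}\wt\otimes_r k^{\otimes q}$. The main obstacle, and really the only delicate point, is therefore not the algebra but the analytic bookkeeping around the generating series: one must check that the series defining $\mathcal E(W(u))$ converge in $L^2(\Omega)$ for the relevant parameter range, that the product of two such series may be rearranged as a Cauchy product and matched coefficient by coefficient, and that the initial polarization and density reduction is genuinely compatible with the $L^2$-continuity of the $I_k$. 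Once these standard facts are recorded, equating the coefficients of $t^p s^q$ yields the reduced identity, and hence, by bilinearity and density, the product formula in full generality.
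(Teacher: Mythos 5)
Your argument is correct, but note that the paper does not prove this proposition at all: it is quoted as a standard preliminary, with a pointer to \cite[p.~208]{NPbook} for the definition of the contraction (the textbook proof there proceeds by induction on the order of one of the integrals, using the recursion $I_{q+1}(f\,\wt\otimes\, h)=I_q(f)I_1(h)-qI_{q-1}(f\otimes_1 h)$). Your route via Wick exponentials is an equally standard alternative, and the two reductions you flag are exactly the ones that need care and do go through: (a) both sides of the identity are bilinear in $(f,g)$ and continuous from $\frak H^{\odot p}\times\frak H^{\odot q}$ into $L^2(\Omega)$ --- the left side by hypercontractivity ($\norm{I_p(f)I_q(g)}_{L^2}\le \norm{I_p(f)}_{L^4}\norm{I_q(g)}_{L^4}\le C_{p,q}\norm{f}\norm{g}$), the right side because $\norm{f\,\wt\otimes_r\, g}\le\norm{f}\norm{g}$ --- so polarization legitimately reduces everything to $f=h^{\otimes p}$, $g=k^{\otimes q}$; (b) the series $\sum_m \frac{1}{m!}I_m(u^{\otimes m})$ has $L^2$-norm squared $e^{\norm{u}^2}$ and, again by hypercontractivity, converges in $L^4$, so the Cauchy-product rearrangement and the coefficient identification (pair against an arbitrary $Z\in L^2$ and use uniqueness of scalar power-series coefficients) are justified. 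Your combinatorial identity $\frac{p!\,q!}{r!\,(p+q-2r)!}\binom{p+q-2r}{p-r}=r!\binom{p}{r}\binom{q}{r}$ and the contraction rule for rank-one tensors are both correct, so the proof is complete modulo the standard facts you explicitly record. What the inductive textbook proof buys is independence from hypercontractivity; what your generating-function proof buys is that the binomial weights fall out of a single coefficient comparison rather than a nested induction.
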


\subsection{Malliavin-Stein approach}\label{ss:ms}

We make use of an identity ({\black labeled below as \eqref{e:key}}) first noted by Jaramillo and Nualart in \cite{JN}. 

{\black First of all, we observe that any} stationary centered Gaussian sequence $X = \{X_k : k\in \ZZ\}$ is embedded in an isonormal Gaussian process $ W =\{W(h) : h\in\frak H\}$. This means that that {\black there always exists a Hilbert space $\frak H$ and an isonormal Gaussian process $W$ (defined on the same probability space) such that, for some $\{e_k : k\ge 1\}\subset \frak H$, $W(e_k)=X_k$ for all $k$, and consequently $\E[W(e_k)W(e_l)] = \langle e_k,e_l\rangle_\frak H=\rho(k-l)$, for all $k,l$} (see, e.g., \cite[Section 1]{LN-ivan} for a justification of this fact). 

For $\ph=\sum_{\ell\ge 0}a_\ell H_\ell\in L^2(\R,\gamma)$, we define the {\black shift mapping} $\ph_1 := \sum_{\ell\ge 1}a_\ell H_{\ell-1}$ and set
\begin{align*}
u_n := \frac{1}{\sigma_n\sqrt{n}} \sum_{m=1}^n \ph_1(X_m) e_m
\end{align*}
Then,
\begin{align}\label{e:key}
\de u_n =V_n.
\end{align}
{\black To prove this, just observe that }$u_n = -DL^{-1} V_n$, and then {\black apply the relations} $L=-\de D$ and $LL^{-1} F = F$, {\black valid} for any centered random variable $F\in L^2(\Omega)$. By Stein's lemma (see \cite[Th.~3.3.1]{NPbook}) for $d_{\rm TV}$ and then by integration by parts via (\ref{duality}), we have that
\begin{align}\notag
\dtv(V_n, N(0,1)) &\le \sup_{g\in\mathcal G} |\EE[V_ng(V_n)] - \EE g'(V_n)| \\
\notag & = \sup_{g\in\mathcal G} |\EE[\de(u_n)g(V_n)] - \EE g'(V_n)| \\
\notag & = \sup_{g\in\mathcal G} |\EE g'(V_n)(1-\langle DV_n, u_n\rangle_\frak{H})| \\
\label{e:ub} &\le 2\sqrt{\Var(\langle DV_n, u_n\rangle_\frak{H})}.
\end{align}
where we used the fact that $\EE\langle DV_n, u_n\rangle_\frak{H}=\EE V_n^2=1$, and the class $\mathcal G$ is composed of those $g:\RR\to\RR$ such that $\norm{g}_\infty< \frac{\sqrt{2\pi}}{2}$ and $\norm{g'}_\infty\le 2$.

Now we estimate {\black from above} the variance in the above bound. Note that, by the chain rule and the relation $DX_k =e_k$, 
\begin{align*}
\langle DV_n, u_n\rangle_\frak{H} = \frac{1}{\sigma_n^2 n} \sum_{k,\ell=1}^n \ph'(X_k)\ph_1(X_\ell) \rho(k-\ell).
\end{align*}
Hence,
\begin{eqnarray}
\label{e:THE_variance}
&&\Var(\langle DV_n, u_n\rangle_\frak{H})\\
& =&\!\!\! \frac{1}{\sigma_n^4 n^2} \sum_{k,\ell, k',\ell'=1}^n \!\!\! \Cov(\ph'(X_k)\ph_1(X_\ell), \ph'(X_{k'})\ph_1(X_{\ell'}))\rho(k-\ell)\rho(k'-\ell').\notag\\
\notag
\end{eqnarray}

The following relation is a consequence of Meyer's inequality and of the equivalence of Sobolev norms \cite[p.72]{Nbook}, justifying our integrability assumption on $\ph$. Its proof is given  in \cite[Lem.~2.2]{NZ}.

\begin{lemma} Let $q>1$. The shift $\ph\mapsto \ph_1$ is a bounded operator from $L^{q}(\RR,\ga)$ to $L^{q}(\RR,\ga)$. 
\end{lemma}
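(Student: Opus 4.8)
The plan is to realize the shift $\ph\mapsto\ph_1$ as a composition of standard Ornstein--Uhlenbeck operators on the one-dimensional Gaussian space $(\RR,\ga)$, and then to control each factor in $L^q$ by means of Meyer's inequalities. Since the shift annihilates the constant term, I would first reduce to the centered case: writing $\ph=\E[\ph]+(\ph-\E[\ph])$ one has $\ph_1=(\ph-\E[\ph])_1$, while $\|\ph-\E[\ph]\|_{L^q(\ga)}\le\|\ph\|_{L^q(\ga)}+|\E[\ph]|\le 2\|\ph\|_{L^q(\ga)}$ (as $\ga$ is a probability measure), so it suffices to treat centered $\ph$.

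First I would record an exact operator identity. Here $D$ denotes differentiation and $L$ the generator on $(\RR,\ga)$, so that $DH_\ell=H_\ell'=\ell H_{\ell-1}$ and $(-L)^{-1}H_\ell=\ell^{-1}H_\ell$ for $\ell\ge 1$. A termwise computation on the expansion $\ph=\sum_{\ell\ge 1}a_\ell H_\ell$ gives
\begin{equation*}
D(-L)^{-1}\ph=\sum_{\ell\ge 1}a_\ell\,D\bigl(\tfrac1\ell H_\ell\bigr)=\sum_{\ell\ge 1}a_\ell H_{\ell-1}=\ph_1 ,
\end{equation*}
so that $\ph_1=D(-L)^{-1}\ph$. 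I would verify this first for polynomial $\ph$, where all series are finite, and then pass to the limit.

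Next I would factor this identity through the square root of the generator. Writing $(-L)^{-1}=(-L)^{-1/2}(-L)^{-1/2}$ one obtains
\begin{equation*}
\ph_1=R\,\psi,\qquad R:=D(-L)^{-1/2},\quad \psi:=(-L)^{-1/2}\ph ,
\end{equation*}
with $\psi$ again centered. The operator $R$ is exactly the Gaussian Riesz transform, and Meyer's inequality asserts that $R$ is bounded on $L^q(\ga)$ for every $q>1$; this is the equivalence of Sobolev norms quoted in the statement, in the form $\|DF\|_{L^q(\ga)}\asymp\|(-L)^{1/2}F\|_{L^q(\ga)}$ on centered $F$. It then remains to bound $\|\psi\|_{L^q(\ga)}=\|(-L)^{-1/2}\ph\|_{L^q(\ga)}$ by a multiple of $\|\ph\|_{L^q(\ga)}$, i.e. to show that the negative fractional power $(-L)^{-1/2}$ is bounded on the centered subspace of $L^q(\ga)$. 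For this I would use the subordination formula $(-L)^{-1/2}=\pi^{-1/2}\int_0^\infty t^{-1/2}P_t\,\rd t$ on centered functions and estimate $\int_0^\infty t^{-1/2}\|P_t\ph\|_{L^q(\ga)}\,\rd t$ using that $P_t$ is a contraction on $L^q(\ga)$ for small $t$ and contracts centered functions at an exponential rate for large $t$. Combining the two bounds yields $\|\ph_1\|_{L^q(\ga)}\le\|R\|_{q\to q}\,C_q\,\|\ph\|_{L^q(\ga)}$, which is the claim.

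The main obstacle is precisely this last step. The spectral gap gives $\|P_t\ph\|_{L^2(\ga)}\le e^{-t}\|\ph\|_{L^2(\ga)}$ for centered $\ph$ for free, but the analogous exponential decay in $L^q(\ga)$ for $q\neq 2$ is not immediate, and it is exactly what makes the integral representation of $(-L)^{-1/2}$ converge in $L^q$. I would obtain it from Nelson's hypercontractivity (for large $t$, $P_t$ maps $L^q$ into a higher exponent space, after which one interpolates), or I would invoke directly P.\,A.\ Meyer's multiplier theorem, which guarantees $L^q(\ga)$-boundedness ($q>1$) of operators of the form $m(-L)$ for the admissible multiplier $m(\ell)=\ell^{-1/2}$ on $\ell\ge 1$. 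Either route packages the two boundedness claims above as consequences of Meyer's inequality, exactly as asserted in the statement.
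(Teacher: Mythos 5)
Your proof is correct and follows essentially the route the paper itself indicates: the paper does not spell out an argument but attributes the lemma to Meyer's inequality and the equivalence of Sobolev norms, deferring to \cite[Lem.~2.2]{NZ}, whose proof is precisely your factorization $\ph_1=D(-L)^{-1}\ph=D(-L)^{-1/2}\circ(-L)^{-1/2}\ph$ with the Riesz transform controlled by Meyer's inequality and $(-L)^{-1/2}$ controlled on centered functions via subordination/hypercontractivity (or the multiplier theorem). You have correctly identified the one genuinely delicate point, namely the exponential $L^q$-decay of $P_t$ on centered functions for $q\neq 2$, and your proposed resolution is the standard one.
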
 

Note that 
\begin{align}
\sqrt{\Var(\ph'(X_k)\ph_1(X_\ell))}&\le \sqrt{\EE\ph'(X_k)^2\ph_1(X_\ell)^2}\nonumber\\
&\le \EE[\ph'(X_0)^4]^{1/4} \EE[\ph_1(X_0)^4]^{1/4}=:C(\ph)<\infty,  \label{e:C(ph)}
\end{align}
so that the covariance in \eqref{e:THE_variance} is finite. 

\subsection{Gebelein's inequality}\label{ss:gebelein}

{\black Up to some slight adaptation}, Theorem \ref{t:gengeb} can be deduced from Veraar's paper \cite{V}. For the sake of completeness, in the Appendix contained in Section \ref{s:pg} we will however present an independent proof of such a result (inspired by the approach of \cite{V}), {\black using tools and concepts that are directly connected to the framework of isonormal Gaussian processes}.

Recall that {\green an} $L^2$ functional of an isonormal Gaussian process is said to have Hermite rank $d$ if its projection to the first $d-1$ chaoses is zero, and its projection to the $d$-th chaos is non trivial. 

\begin{theorem}[Gebelein's inequality for {\green isonormal} processes]\label{t:gengeb} Let $W = \{W(h) : h\in \frak H\}$ be an isonormal Gaussian process over some real separable Hilbert space $\frak{H}$, and let $\frak{H}_1$, $\frak{H}_2$ be two Hilbert subspaces of $\frak H$. Define $W_1$ and $W_2$, respectively, to be the restriction of $W$ to $\frak H_1$ and $\frak H_2$. Now consider two measurable mappings $F_i : \R^{\frak H_i}\to \R $, $i=1,2$, and assume that each $F_i(W_i)$ is centred and square-integrable. If $F_1$ has Hermite rank equal to $p\geq 1$, one has that
\begin{eqnarray}\label{e:gengeb}
|\EE[F_1(W_1) F_2(W_2)] | \leq \theta^p \Var(F_1(W_1))^{1/2} \Var(F_2(W_2))^{1/2},
\end{eqnarray}
where $\theta := \sup_{h\in \frak H_1, g\in \frak H_2 : \|g\|, \|h\| = 1} \left| \langle h,g \rangle  \right| \in [0,1].$
\end{theorem}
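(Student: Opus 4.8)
The plan is to prove Gebelein's inequality \eqref{e:gengeb} by reducing it, via a spectral/semigroup argument, to the well-known fact that the Ornstein-Uhlenbeck semigroup is a contraction on each Wiener chaos with contraction factor governed by the correlation coefficient $\theta$. The key structural observation is that the quantity $\theta$ is precisely the operator norm of the ``projected correlation'' between the two Gaussian subfamilies $W_1$ and $W_2$, so one expects an estimate in which the $p$-th power of $\theta$ appears because $F_1$ starts its chaos expansion at order $p$.

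\medskip

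First I would set up the conditional-expectation interpretation of the correlation. Writing $\EE[F_1(W_1)F_2(W_2)] = \EE\!\left[ F_1(W_1)\,\EE[F_2(W_2)\mid W_1] \right]$, the idea is to express the conditional expectation $\EE[F_2(W_2)\mid W_1]$ through a suitable operator acting on $F_2$. Concretely, one decomposes each $W(g)$ for $g\in\frak H_2$ into its projection onto the closed span of $\frak H_1$ plus an independent remainder, which is exactly the Gaussian analogue of writing a correlated pair in terms of an Ornstein-Uhlenbeck-type interpolation. Because $\theta$ is the supremum of $|\langle h,g\rangle|$ over unit vectors, it bounds the norm of the cross-covariance operator between the two subspaces; this is what will let me dominate the conditional expectation by a contraction.

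\medskip

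Next I would run the chaos-by-chaos estimate. Expanding $F_1(W_1)=\sum_{k\ge p} I_k(f_k)$ (the sum starting at $p$ by the Hermite-rank hypothesis) and $F_2(W_2)=\sum_{k\ge 1} I_k(g_k)$, orthogonality of multiple integrals collapses $\EE[F_1(W_1)F_2(W_2)]$ to a sum over matching chaos orders $k\ge p$. On each such order the pairing is controlled by $\theta^k \le \theta^p$ (since $\theta\le 1$ and $k\ge p$), which is the heart of the bound: the cross term at level $k$ carries a factor $\theta^k$ coming from the $k$-fold tensor contraction against the cross-covariance operator. Summing over $k$ and applying Cauchy-Schwarz in the chaos decomposition then yields $|\EE[F_1(W_1)F_2(W_2)]|\le \theta^p \big(\sum_{k\ge p} k!\|f_k\|^2\big)^{1/2}\big(\sum_{k\ge 1}k!\|g_k\|^2\big)^{1/2}$, and the two factors are exactly $\Var(F_1(W_1))^{1/2}$ and $\Var(F_2(W_2))^{1/2}$, giving \eqref{e:gengeb}. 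Finally, $\theta\in[0,1]$ follows from Cauchy-Schwarz applied to $\langle h,g\rangle$ with unit vectors.

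\medskip

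I expect the main obstacle to be the rigorous justification of the ``contraction factor $\theta^k$ on the $k$-th chaos'' step. The clean interpolation identity and the resulting single number $\theta$ are transparent when $\frak H_1$ and $\frak H_2$ are one-dimensional (the classical bivariate Gaussian case, where the OU semigroup with parameter $\theta$ does the job), but in the general subspace setting the cross-covariance is an operator rather than a scalar, and one must verify that its $k$-th tensor power still has operator norm at most $\theta^k$ and interacts correctly with the symmetrization in the product/contraction formula. Handling this carefully — most likely by diagonalizing or by reducing to a supremum over pairs of unit vectors and then lifting to tensor powers — is where the real work lies; the rest is orthogonality and Cauchy-Schwarz. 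The remark in the excerpt that the result follows ``up to some slight adaptation'' from Veraar \cite{V} suggests that this operator-norm-on-tensor-powers bookkeeping is exactly the adaptation being absorbed.
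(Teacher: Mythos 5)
Your argument is correct in outline, but it takes a genuinely different route from the paper's. The paper first proves the inequality for a \emph{rigidly correlated} pair of isonormal processes (meaning $\EE[X(h)Y(g)]=\theta\langle h,g\rangle$ for a single scalar $\theta$), where the bound drops out of the identity $\EE[H_\alpha(X)H_\beta(Y)]=\theta^{|\alpha|}\alpha!\,{\bf 1}_{\alpha=\beta}$ for multivariate Hermite polynomials plus Cauchy--Schwarz; it then reduces the general subspace setting to this rigid case by a dilation: an isometric embedding $\tau:{\frak H}_2\to{\frak H}_1\oplus{\frak H}_2$ built from the operator $U=\pi_{{\frak H}_2}\pi_{{\frak H}_1}$ (of norm at most $\theta^2$), together with auxiliary independent isonormal processes $Y,Z$ and $R=\theta Y+\sqrt{1-\theta^2}\,Z$, so that $\big(Y({\frak H}_1\oplus\{0\}),R(\tau({\frak H}_2))\big)$ has the same law as $(W_1,W_2)$. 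You instead argue directly on the joint chaos expansion: since $F_i(W_i)$ is $\sigma(W_i)$-measurable, its kernels lie in ${\frak H}_i^{\odot k}$, orthogonality matches chaos orders, and the cross term at order $k$ equals $k!\langle f_k,g_k\rangle_{{\frak H}^{\otimes k}}$, which you bound by $\theta^k k!\|f_k\|\,\|g_k\|$ before summing. The step you flag as the ``real work'' --- that the $k$-th tensor power of the cross-projection has norm $\theta^k$ --- is in fact routine and closes your gap: $\langle f_k,g_k\rangle=\big\langle f_k,(\pi_{{\frak H}_1}|_{{\frak H}_2})^{\otimes k}g_k\big\rangle$, the operator norm is multiplicative over Hilbert-space tensor products, $\|\pi_{{\frak H}_1}|_{{\frak H}_2}\|_{op}=\theta$ by the very definition of $\theta$, and restricting to symmetric tensors only decreases the supremum. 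So your route is, if anything, shorter and avoids the auxiliary construction; what the paper's dilation buys is a reduction to the classical scalar-correlation (Mehler-type) situation, which is closer in spirit to Veraar's original argument and makes the factor $\theta^p$ appear without any tensor-norm bookkeeping. Note finally that your opening paragraph on conditional expectations and the Ornstein--Uhlenbeck semigroup is never actually used in the chaos-by-chaos estimate and could be dropped.
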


\section{Proof of the main result}\label{s:3}

\subsection{$k$-sparsity}\label{s:sparse}

As we will see in the next subsection, combining Gebelein's inequality with the Malliavin-Stein approach will lead to effective upper bounds for the total variation distance in the Breuer-Major CLT. To this end, we need information on the Hermite rank of functionals of the type $F:=\ph'(W(h))\ph_1(W(g))$ for $h,g\in\frak H$ with unit norm, and $\varphi\in \mathbb D^{1,4}$.  We introduce the notion of $k$-{\it sparsity}. 

\begin{definition}
Let $\ph\in L^2(\R,\ga)$ be given by the series expansion $\ph=\sum_{q\ge d}a_q H_q$. We say the $\ph$ is $k$-{\it sparse} if $\min\{j-i: j>i\ge d, a_i\neq 0, a_j\neq 0\}\ge k$.
\end{definition}

\begin{remark}
Symmetric functions are $2$-sparse. Indeed, since $H_q(-x)=(-1)^q H(x)$ for all $q\ge 1$, the expansion of a symmetric function satisfies  $a_{2k-1}=0$ for $k\in\NN$. 
\end{remark}

\begin{lemma}
Assume that {\black $\varphi\in \mathbb D^{1,4}$} is $2$-sparse
and set $F:=\ph'(W(h))\ph_1(W(g))$, for $h,g\in\frak H$ with unit norm. Then $F-\EE[F]$ has Hermite rank at least $2$.
\end{lemma}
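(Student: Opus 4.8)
The plan is to prove that the projection of $F$ onto the first Wiener chaos vanishes: since the zeroth chaos of $F$ equals $\EE[F]$, this immediately yields that $F-\EE[F]$ has Hermite rank at least $2$. Write $U:=W(h)$ and $V:=W(g)$, a centred Gaussian pair with unit variances and $\EE[UV]=\langle h,g\rangle_{\frak H}=:\theta\in[-1,1]$, and let $J_1$ denote the orthogonal projection of $L^2(\Omega)$ onto the first chaos. Since the first chaos is the closed span of $\{W(e):e\in\frak H\}$ and $\EE[W(e)]=0$, one has $J_1F=0$ if and only if $\EE[F\,W(e)]=0$ for every $e\in\frak H$. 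First I would decompose an arbitrary $e$ as $e=\alpha h+\beta g+e^\perp$ with $e^\perp$ orthogonal to both $h$ and $g$; then $W(e^\perp)$ is independent of $(U,V)$, hence of $F$, so $\EE[F\,W(e^\perp)]=\EE[F]\,\EE[W(e^\perp)]=0$, and the whole problem reduces to establishing the two identities $\EE[F\,U]=0$ and $\EE[F\,V]=0$.

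To compute these, I would first record the integrability facts. The hypothesis $\ph\in\mathbb D^{1,4}$ forces $\ph'\in L^4(\R,\ga)$, while the shift lemma of Section~\ref{ss:ms} gives $\ph_1\in L^4(\R,\ga)$ as well; by Cauchy--Schwarz both $U\ph'(U)$ and $V\ph_1(V)$ are then square-integrable. I can therefore group, for instance, $\EE[F\,U]=\EE\big[(U\ph'(U))\,\ph_1(V)\big]$ as the expectation of a product of two $L^2(\R,\ga)$ functions of the correlated Gaussians $U,V$, and evaluate it through the Mehler-type formula
\[
\EE\big[f(U)g(V)\big]\;=\;\sum_{m\ge 0}\frac{\theta^m}{m!}\Big(\int_\R f\,H_m\,\rd\ga\Big)\Big(\int_\R g\,H_m\,\rd\ga\Big),
\]
which converges absolutely for $f,g\in L^2(\R,\ga)$ and $|\theta|\le 1$. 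The relevant Hermite coefficients are immediate from $\ph=\sum_{q\ge d}a_qH_q$: one has $\ph'=\sum_m(m+1)a_{m+1}H_m$ and $\ph_1=\sum_n a_{n+1}H_n$, and the coefficients of $x\mapsto x\ph'(x)$ and $x\mapsto x\ph_1(x)$ follow from the three-term recurrence $xH_m=H_{m+1}+mH_{m-1}$.

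Carrying out this bookkeeping, every term surviving in the resulting series pairs a coefficient $a_i$ with a coefficient $a_j$ whose indices satisfy $|i-j|=1$ (this is the analytic shadow of the fact that the product $H_m(U)H_n(V)$ carries a non-trivial first-chaos component exactly when $|m-n|=1$). I would then invoke $2$-sparsity: if $a_i\ne 0$, $a_j\ne 0$ and $i\ne j$, then $|i-j|\ge 2$, so no admissible pair with $|i-j|=1$ can have both coefficients non-zero. Hence every surviving product $a_ia_j$ vanishes, and the entire series collapses to $0$, giving $\EE[F\,U]=0$. The identical computation applied to $\EE[F\,V]=\EE\big[\ph'(U)\,(V\ph_1(V))\big]$ yields $\EE[F\,V]=0$, so $J_1F=0$ and the lemma follows.

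The step I expect to be most delicate is not the sparsity bookkeeping, which is essentially combinatorial, but the integrability justification that legitimises it. Multiplying $\ph'(U)$ or $\ph_1(V)$ by the Gaussian coordinate $U$ or $V$ destroys the clean $L^2$ structure unless one already controls fourth moments; this is precisely where the hypothesis $\ph\in\mathbb D^{1,4}$ (rather than the weaker $\mathbb D^{1,2}$) is genuinely used, and it is what rules out a naive term-by-term manipulation of two merely $L^2$-convergent Hermite series. Keeping careful track of where $L^4$-integrability is really needed, and confirming that the Mehler series may indeed be analysed term by term, is the part I would write out with the most care.
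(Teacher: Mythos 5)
Your proof is correct, and it follows a genuinely different route from the paper's. The paper expands $F=\sum_{p,q\ge d}qa_qa_p\,I_{q-1}(h^{\otimes q-1})I_{p-1}(g^{\otimes p-1})$ and invokes the product formula for multiple Wiener--It\^o integrals: when $|p-q|\ge 2$ the lowest-order chaos appearing in $I_{q-1}(h^{\otimes q-1})I_{p-1}(g^{\otimes p-1})$ is $I_{|p-q|}$, and when $p=q$ only even-order chaoses appear; either way the first chaos is absent, and $2$-sparsity kills the remaining index pairs with $|p-q|=1$. You instead test orthogonality of $F$ against the first chaos directly, reducing via the decomposition $e=\alpha h+\beta g+e^\perp$ to the two scalar identities $\EE[FU]=\EE[FV]=0$, and compute these by Mehler's formula $\EE[H_m(U)H_n(V)]=\delta_{mn}m!\,\theta^m$ together with the recurrence $xH_m=H_{m+1}+mH_{m-1}$; the surviving products are all of the form $a_ia_j$ with $|i-j|=1$, which $2$-sparsity annihilates. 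The two arguments hinge on the same index-gap observation, but yours is more elementary (one-dimensional Hermite identities only, no contraction calculus on $\frak H^{\otimes k}$), at the price of proving only what is needed (vanishing of the first-chaos projection), whereas the paper's expansion also exposes, e.g., that the diagonal terms $p=q$ contribute only to even chaoses. Your integrability discussion is sound: $\ph\in\mathbb D^{1,4}$ gives $\ph'\in L^4(\R,\ga)$ and, via the shift lemma, $\ph_1\in L^4(\R,\ga)$, so $U\ph'(U)$ and $V\ph_1(V)$ are in $L^2$ and the Mehler series for the grouped pairs converges absolutely by Cauchy--Schwarz; this legitimises the term-by-term bookkeeping.
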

\begin{proof} By \cite[Th.~2.7.7]{NPbook}, we have  $H_p(W(e))=I_p(e^{\otimes p})$ {\green for $e\in\mathfrak H$} with $\norm{e}_\frak{H}=1$. Thus,
\begin{align*}
\ph'(W(h))\ph_1(W(g)) = \sum_{q\ge d}\sum_{p\ge d} qa_q a_p I_{q-1}(h^{\otimes q-1})I_{p-1}(g^{\otimes p-1}),
\end{align*}
where the series convergence in $L^2(\Omega)$.  By $2$-sparsity,  only those products of multiple integrals with indices $(p,q)$ satisfying $p=q$ or $|p-q|\ge 2$ {\green remain}. Assume  $|p-q|\ge 2$. By Proposition \ref{f:product}, the  multiple integral of lowest order in the chaos expansion for the product is $I_{|p-q|}(\cdot)$, hence the projection of $I_{q-1}(h^{\otimes q-1})I_{p-1}(g^{\otimes p-1})$ to the first chaos is zero. If $p=q$,  Proposition \ref{f:product} shows that the chaos expansion for the product contains only multiple integrals of even order, ending the proof.  
\end{proof}

\subsection{Gebelein-Malliavin-Stein upper bound}

Putting things together, we have the following Gebelein-Malliavin-Stein upper bound for the total variation distance. 

\begin{proposition}\label{p:MSG}
Let $\ph(X_1) \in \mathbb D^{1,4}$ 
 have Hermite rank $d\ge 1$, {\black and define $V_n = F_n/\sigma_n$ according to \eqref{e:F_n} and $\sigma^2_n := \Var (F_n)$}.  We have
\begin{align*}
\dtv(V_n, N(0,1)) \le \frac{4C(\ph)}{\sigma_n^2} \sqrt{\frac{1}{n^2} \sum_{i,j,k,\ell=0}^{n-1} 
\bigg| \rho(j-k) \rho(i-j)\rho(k-\ell)\bigg| .}
\end{align*}
If, in addition, $\ph$ is $2$-sparse, then
\begin{align*}
\dtv(V_n, N(0,1)) \le \frac{4C(\ph)}{\sigma_n^2} \sqrt{\frac{1}{n^2} \sum_{i,j,k,\ell=0}^{n-1}  \bigg|\rho(j-k)^2 \rho(i-j)\rho(k-\ell)\bigg| }.
\end{align*}
\end{proposition}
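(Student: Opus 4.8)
The plan is to feed Gebelein's inequality (Theorem~\ref{t:gengeb}) into the Malliavin–Stein estimate already recorded in \eqref{e:ub}--\eqref{e:THE_variance}. Starting from $\dtv(V_n,N(0,1))\le 2\sqrt{\Var(\langle DV_n,u_n\rangle_{\frak H})}$ together with the explicit expansion \eqref{e:THE_variance} of the variance as a quadruple sum, the entire problem reduces to controlling each individual covariance $\Cov(\ph'(X_k)\ph_1(X_\ell),\ph'(X_{k'})\ph_1(X_{\ell'}))$, the two factors $\rho(k-\ell)\rho(k'-\ell')$ being left untouched for now.

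For each fixed quadruple $(k,\ell,k',\ell')$ I would apply Theorem~\ref{t:gengeb} to the centred versions of $F_1=\ph'(W(e_k))\ph_1(W(e_\ell))$ and $F_2=\ph'(W(e_{k'}))\ph_1(W(e_{\ell'}))$, taking $\frak H_1=\overline{\mathrm{span}}(e_k,e_\ell)$ and $\frak H_2=\overline{\mathrm{span}}(e_{k'},e_{\ell'})$, so that each $F_i$ is a functional of the restriction $W_i$. The variance factors are bounded by $C(\ph)^2$ by \eqref{e:C(ph)}, while $F_1-\EE F_1$ has Hermite rank at least $1$ in general (any centred functional does), and at least $2$ when $\ph$ is $2$-sparse, by the Lemma of Section~\ref{s:sparse}. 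Gebelein's inequality then gives $|\Cov(\cdots)|\le\theta^p\,C(\ph)^2$, with $p=1$ (resp.\ $p=2$) in the general (resp.\ $2$-sparse) case, where $\theta=\theta(k,\ell;k',\ell')$ is the maximal correlation between $\frak H_1$ and $\frak H_2$.

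The key step, and the one I expect to be the main obstacle, is to bound this purely geometric quantity $\theta$ by the covariances $\rho(\cdot)$ themselves, the natural target being
\begin{equation*}
\theta(k,\ell;k',\ell')\le\big(\rho(k-k')^2+\rho(k-\ell')^2+\rho(\ell-k')^2+\rho(\ell-\ell')^2\big)^{1/2}.
\end{equation*}
Since $\theta$ is the operator norm of the rank-$\le 2$ cross-correlation map $P_{\frak H_2}|_{\frak H_1}$, it is dominated by the associated Hilbert--Schmidt norm; the difficulty is that the generating vectors $e_k,e_\ell$ (resp.\ $e_{k'},e_{\ell'}$) need not be orthonormal, so passing from a Hilbert--Schmidt norm computed in orthonormal bases to the raw covariances $\rho$ is not a formal manipulation and must genuinely exploit the positive-definiteness of $\rho$ (one checks, for instance on the AR$(1)$ covariance, that the analogous $\ell^1$ bound $\theta\le\sum_{\mathrm{cross}}|\rho|$ holds but can fail as a generic Hilbert-space fact). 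For the general case the weaker $\ell^1$ bound over the four cross pairs already suffices.

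Granting the bound on $\theta$, the remainder is bookkeeping. Using $\theta\le\sum_{\mathrm{cross}}|\rho|$ in the general case one gets
\begin{equation*}
\Var(\langle DV_n,u_n\rangle_{\frak H})\le\frac{C(\ph)^2}{\sigma_n^4\,n^2}\sum_{k,\ell,k',\ell'=1}^n\Big(\sum_{\mathrm{cross}}|\rho|\Big)\,|\rho(k-\ell)|\,|\rho(k'-\ell')|,
\end{equation*}
and each of the four resulting terms is a ``chain'' of three factors threading the four indices: after relabelling $(k,\ell,k',\ell')\mapsto(i,j,k,\ell)$ and using that $\rho$ is even, each equals $\sum_{i,j,k,\ell}|\rho(i-j)\rho(j-k)\rho(k-\ell)|$, for a total factor $4$. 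Since $\rho$ depends only on index differences, the range may be shifted to $\{0,\dots,n-1\}$; plugging the result into $\dtv\le 2\sqrt{\Var}$ (the prefactor $4$ arising as $2\cdot\sqrt{4}$) yields the first claimed inequality. The second follows identically, the only change being that the $2$-sparse bound $\theta^2\le\sum_{\mathrm{cross}}\rho^2$ promotes the middle factor of each chain to $\rho(j-k)^2$.
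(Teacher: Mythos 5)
Your overall architecture is exactly the paper's: you start from the Malliavin--Stein bound \eqref{e:ub} and the variance expansion \eqref{e:THE_variance}, apply Theorem \ref{t:gengeb} to each covariance with $\frak H_1,\frak H_2$ the spans of the two relevant pairs of $e$'s, invoke the sparsity lemma for the Hermite rank, and your bookkeeping (the four symmetric ``chains'', the factor $4=2\sqrt{4}$, the shift of the index range) is correct and matches the paper's. The problem is the step you yourself isolate as the main obstacle and then assume (``granting the bound on $\theta$''): neither the $\ell^1$ bound $\theta\le\sum_{\mathrm{cross}}|\rho|$ nor the $\ell^2$ bound $\theta^2\le\sum_{\mathrm{cross}}\rho^2$ is true for a general stationary covariance, so the gap cannot be closed in the form you propose. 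Take the MA(1) sequence $X_m=(\xi_m+\xi_{m+1})/\sqrt{2}$ with $\xi$ i.i.d.\ standard Gaussian, so that $\rho(0)=1$, $\rho(\pm1)=1/2$ and $\rho\equiv 0$ otherwise, and the configuration $(i,j,k,\ell)=(0,1,2,3)$. The four cross-covariances are $0,0,1/2,0$, so both of your candidate majorants equal $1/2$; but the unit vectors $h=(2e_1-e_0)/\sqrt{3}\in\frak H_1$ and $g=(2e_2-e_3)/\sqrt{3}\in\frak H_2$ satisfy $\langle h,g\rangle = 2/3$, whence $\theta\ge 2/3>1/2$. The culprit is exactly the ill-conditioning you worried about: the Gram matrix of $\{e_i,e_j\}$ has smallest eigenvalue $1-|\rho(i-j)|$, and whitening can inflate the cross-covariances by a factor of order $(1-|\rho(i-j)|)^{-1/2}(1-|\rho(k-\ell)|)^{-1/2}$.

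For what it is worth, the paper's own proof of Proposition \ref{p:MSG} is no more careful at this point: it asserts, as ``straightforward'', the identity $|\theta|=\max(|\rho(i-k)|,|\rho(i-\ell)|,|\rho(j-\ell)|,|\rho(j-k)|)$, which the same MA(1) configuration refutes ($\theta=2/3$ against a maximum of $1/2$). So your instinct that the geometric bound on $\theta$ is the delicate point of the whole argument is sound, and your proposal is an honest reconstruction of the intended proof; but as written it contains a genuine, unclosed gap, and the specific inequality you would need is false. Any repair must treat the nearly degenerate configurations (those with $|\rho(i-j)|$ or $|\rho(k-\ell)|$ close to $1$) separately, or choose the subspaces $\frak H_1,\frak H_2$ differently, or replace the raw cross-covariances by their whitened versions in the final estimate.
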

\begin{proof}
We evaluate the right-hand side of \eqref{e:THE_variance}, by applying Theorem \ref{t:gengeb} in the specific situation where $\frak H$ is the linear span of $\{e_i,e_j,e_k,e_\ell\}$, $\frak H_1$ the linear span of $\{e_i,e_j\}$, $\frak H_{2}$ the linear span of $\{e_k,e_\ell\}$. It is straightforward that 
\begin{align*}
|\theta| &= \max( |\rho(i-k)|, |\rho(i-\ell)|, |\rho(j-\ell)|, |\rho(j-k)|)\\
&\le  |\rho(i-k)|+ |\rho(i-\ell)|+ |\rho(j-\ell)|+|\rho(j-k)|.
\end{align*}
The conclusion follows from symmetry, and by using the estimate \eqref{e:C(ph)}. 
\end{proof}

\subsection{End of the proof}

We  are now ready to finish the proof of Theorem \ref{t:main}.
We set $\rho_n(k)=|\rho(k)|1_{|k|<n}$.
\bigskip

\noindent{\it Proof of $(i)$}.
We have
\begin{eqnarray*}
\sum_{i,j,k,\ell=0}^{n-1}  \bigg|\rho(j-k) \rho(i-j)\rho(k-\ell)\bigg|  &\le& \sum_{i,\ell=0}^{n-1} \big(\rho_n*\rho_n*\rho_n\big)(i-\ell)\\ 
&\le& n \norm{\rho_n*\rho_n*\rho_n}_{\ell^1(\ZZ)} \\
&\le& n \norm{\rho_n}^3_{\ell^1(\ZZ)}, 
\end{eqnarray*}
the last inequality being obtained by applying twice Young's inequality for convolutions.  The result follows from Proposition \ref{p:MSG}.  \qed

\bigskip

\noindent{\it Proof of $(ii)$}.
First we rewrite the sum of products as a sum of the product of convolutions by introducing the function 
$1_n(k):= 1_{|k|<n}$. We have
\begin{align*}
&\sum_{i,j,k,\ell=0}^{n-1}  |\rho(j-k)^2 \rho(i-j)\rho(k-\ell)| \\
 &= \sum_{i,j,k,\ell=0}^{n-1}  |\rho(j-k)^2 \rho(i-j)\rho(k-\ell) 1_n(\ell-i)| \\ 
&= \sum_{j,\ell=0}^{n-1}  (\rho_n* 1_n)(\ell-j) (\rho_n * \rho_n^2)(\ell-j) \le n \langle \rho_n*1_n, \rho_n*\rho^2_n\rangle_{\ell^2(\ZZ)}.
\end{align*}
Let $b\in[1,2]$. Applying successively H\"older's inequality and Young's inequality, we are led to  
\begin{align*}
&\sum_{i,j,k,\ell=0}^{n-1} \left| \rho(j-k)^2 \rho(i-j)\rho(k-\ell) \right| \\
&\le n \norm{\rho_n*1_n}_{\ell^{b\over {b-1}}(\ZZ)} \norm{\rho_n*\rho^2_n}_{\ell^b(\ZZ)} \\
&\le n \norm{\rho_n}_{\ell^{b}} \norm{1_n}_{\ell^{\frac{b}{2b-2}}(\ZZ)} \norm{\rho_n}_{\ell^{b}(\ZZ)} \norm{\rho_n^2}_{\ell^{1}(\ZZ)} = n^{\frac{3b-2}{b}} \norm{\rho_n^2}_{\ell^{1}(\ZZ)} \norm{\rho_n}^2_{\ell^b(\ZZ)}.
\end{align*}
The result follows from Proposition \ref{p:MSG}. \qed

\section{A remark on optimality}\label{s:concl}

Our Gebelein-Malliavin-Stein upper bound (Proposition \ref{p:MSG}) could not provide the rate $n^{-1/2}$ in the case where $\rho$ is square integrable but not summable. 
Indeed, restricting ourselves to the subset of indices $\{i=j=k\}$, we obtain that
\begin{eqnarray*}
&&\frac1n\sum_{i,j,k,\ell=0}^{n-1} \big| \rho(j-k)^2 \rho(i-j)\rho(k-\ell)\big| \ge \frac1n\sum_{k,\ell=1}^{n} \big|\rho(k-\ell)\big|\\
& =& 1+2\sum_{\ell=1}^{n-1}(1-\frac{\ell}{n})|\rho(\ell)| 
\ge 1+  \sum_{\ell=1}^{n/2} |\rho(\ell)| 
\end{eqnarray*}
goes to infinity as $n\to\infty$.

\section{Appendix: Proof of Theorem \ref{t:gengeb}}\label{s:pg}

We start by proving a similar result in a simpler setting, to which we can reduce the general case.

\begin{proposition}\label{p:rigid} Let $(X, Y)$ be a pair of jointly isonormal Gaussian processes over ${\frak H}$, such that $X,Y$ are rigidly correlated, in the following sense: there exists $\theta\in [-1,1]$ such that, for every $h,g\in {\frak H}$, one has $\EE[X(h)Y(g)] = \theta \langle h, g \rangle.$ Consider measurable mappings $F : \R^{{\frak H}}\to \R $ and $G : \R^{{\frak H}}\to \R $ such that $F(X)$ and $G(Y)$ are square-integrable and centred, and assume that $F$ has Hermite rank $p\geq 1$. Then,
\begin{equation}\label{e:pargeb}
|\EE[F(X)G(Y)] | \leq |\theta|^p  \Var(F(X))^{1/2}\Var(G(Y))^{1/2}
\end{equation}
\end{proposition}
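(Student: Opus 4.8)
The plan is to prove the rigidly-correlated version (Proposition~\ref{p:rigid}) by expanding both functionals in Wiener-It\^o chaos and exploiting the precise spectral action of the rigid correlation on multiple integrals. The key structural fact is that when $X$ and $Y$ are jointly isonormal with $\EE[X(h)Y(g)]=\theta\langle h,g\rangle$, one can realize the pair on a single isonormal process: write $Y = \theta X + \sqrt{1-\theta^2}\,X^\perp$ at the level of the Gaussian Hilbert spaces, where $X^\perp$ is an independent isonormal copy. Under this coupling, the conditional expectation $\EE[G(Y)\mid X]$ is governed by an Ornstein--Uhlenbeck-type operator: indeed Mehler's formula identifies $\EE[G(Y)\mid X]$ with $P_t G$ evaluated along $X$, where $e^{-t}=\theta$ (for $\theta\in[0,1]$; the sign is handled separately). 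Since $P_t$ contracts the $k$-th chaos by the factor $e^{-kt}=\theta^k$, this is exactly where the exponent $p$ in the bound $|\theta|^p$ will come from.

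\medskip

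\noindent Concretely, first I would write the chaos expansions $F(X)=\sum_{k\ge p} I_k^X(f_k)$ and $G(Y)=\sum_{m\ge 1} I_m^Y(g_m)$, using that $F$ has Hermite rank $p$ so the sum for $F$ starts at $k=p$, and that both are centred so neither has a zeroth-order term. Next I would compute the covariance by conditioning: $\EE[F(X)G(Y)] = \EE[F(X)\,\EE(G(Y)\mid X)]$. Using the Mehler/conditional-expectation identification above, $\EE[G(Y)\mid X] = \sum_{m\ge 1}\theta^m I_m^X(g_m)$, and then orthogonality of multiple integrals of distinct orders under the single process $X$ collapses the double sum to its diagonal:
\begin{equation*}
\EE[F(X)G(Y)] = \sum_{k\ge p} \theta^k\, k!\,\langle f_k, g_k\rangle_{\frak H^{\otimes k}}.
\end{equation*}
From here the estimate is Cauchy--Schwarz applied termwise. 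Since every surviving index satisfies $k\ge p$ and $|\theta|\le 1$, one has $|\theta|^k \le |\theta|^p$, so pulling out $|\theta|^p$ gives
\begin{equation*}
|\EE[F(X)G(Y)]| \le |\theta|^p \sum_{k\ge p} k!\,|\langle f_k,g_k\rangle| \le |\theta|^p\Big(\sum_{k} k!\|f_k\|^2\Big)^{1/2}\Big(\sum_k k!\|g_k\|^2\Big)^{1/2},
\end{equation*}
and the two factors on the right are exactly $\Var(F(X))^{1/2}$ and $\Var(G(Y))^{1/2}$, since both functionals are centred. This yields \eqref{e:pargeb}.

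\medskip

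\noindent \textbf{Passing to the general statement.} To deduce Theorem~\ref{t:gengeb} from Proposition~\ref{p:rigid}, the idea is a reduction to the rigid case by choosing coordinates adapted to the angle between the subspaces. The quantity $\theta=\sup_{\|h\|=\|g\|=1}|\langle h,g\rangle|$ over $h\in\frak H_1,\,g\in\frak H_2$ is the cosine of the minimal angle between $\frak H_1$ and $\frak H_2$; the plan is to dominate the genuine (non-rigid) correlation between $W_1$ and $W_2$ by a rigidly correlated pair with correlation parameter $\theta$. One natural route is to introduce an auxiliary Gaussian coupling that maximizes correlation at level $\theta$ while preserving the marginal laws of $F_1(W_1)$ and $F_2(W_2)$, and to verify that replacing the true joint law by the rigid one can only increase $|\EE[F_1 F_2]|$ (this monotonicity is where the maximal-correlation interpretation of $\theta$ is used).

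\medskip

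\noindent \textbf{The main obstacle} I anticipate is precisely this reduction step: justifying that the supremal single-angle $\theta$ controls the full correlation structure between the two subspaces when that structure is not rigid (i.e.\ when the correlation operator between $\frak H_1$ and $\frak H_2$ has several distinct singular values). The clean diagonalization leading to $\theta^k$ in the rigid case does not survive verbatim, and one must argue that the largest singular value $\theta$ still dominates, using that $F_1$ has Hermite rank $p$ so that no chaos of order below $p$ contributes and each contributing chaos is contracted by at least $\theta^p$. The spectral/singular-value decomposition of the cross-correlation operator, combined with the Hermite-rank hypothesis, is the technical heart; the rigid proposition above is the model computation that makes it work.
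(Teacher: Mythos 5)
Your proof of Proposition \ref{p:rigid} is correct and takes essentially the same route as the paper's: both expand $F$ and $G$ in chaos, use that rigid correlation couples the $k$-th chaoses of $X$ and $Y$ diagonally with factor $\theta^k$ (you obtain this via the coupling $Y=\theta X+\sqrt{1-\theta^2}X^\perp$ and Mehler's formula, the paper by computing $\EE[H_\alpha(X)H_\beta(Y)]=\theta^{|\alpha|}\alpha!\,{\bf 1}_{\alpha=\beta}$ for multivariate Hermite polynomials), then bound $|\theta|^k\le|\theta|^p$ on the surviving indices $k\ge p$ and conclude by Cauchy--Schwarz. The differences are purely presentational, so there is nothing further to flag for this statement.
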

\begin{proof} Let $\{e_i : i\geq 1\}$ be an orthonormal basis of ${\frak H}$. We write $\alpha, \beta,...$ to indicate multi-indices; for a multi-index $\alpha$, the symbol $H_\alpha$ indicates the corresponding multivariate polynomial. We also write 
$$
H_\alpha(X) = H_\alpha(X(e_i) : i=1,2,... )  = \prod_{i=1}^\infty H_{a_i} (X(e_i)),
$$
where $H_k$ stands for the $k$th Hermite polynomial in one variable; $H_\alpha(Y)$ is defined analogously. From the properties of Hermite polynomials and from the rigid correlation assumption, we infer that, for any choice of multi-indices $\alpha, \beta$, one has that $\E[H_\alpha(X) H_\beta(Y)] =  \theta^{|\alpha|} \alpha ! {\bf 1}_{\alpha = \beta}.$
Now, by the chaotic representation property of isonormal processes, one has that 
$$
F(X) = \sum_{\alpha : |\alpha|\geq p} b_\alpha H_\alpha(X), \quad  G(Y) = \sum_{\alpha : |\alpha|\geq 1} c_\alpha H_\alpha(Y),
$$
with convergence in $L^2(\Omega)$. By virtue of the previous discussion,
$$
|\E[F(X)G(Y)] | \leq \sum_{\alpha : |\alpha|\geq p} | b_\alpha c_\alpha| |\theta|^{|\alpha|} \alpha !\leq  |\theta| ^p\sum_{\alpha : |\alpha|\geq 1} | b_\alpha c_\alpha| \alpha !,
$$
and the conclusion follows from an application of the Cauchy-Schwarz inequality.
\end{proof}

We now turn to the proof of  Theorem \ref{t:gengeb}.

\begin{proof}[Proof of Theorem \ref{t:gengeb}] Without loss of generality, we assume that $\theta \in (0,1)$. For $i=1,2$, we denote by $\pi_{{\frak H}_i}$ the orthogonal projection operator onto ${\frak H}_i$. We will make use of the following estimate: for every $g\in {\frak H}_2$ with unit norm,
\begin{equation}\label{e:esti}
\| \pi_{{\frak H}_1}(g) \| \leq \theta,
\end{equation}
which follows from the relation $\| \pi_{{\frak H}_1}(g) \|^2= | \langle g, \pi_{{\frak H}_1}(g) \rangle |\leq \theta \| \pi_{{\frak H}_1}(g) \|.$ {\black Now write ${\frak H}_1\oplus {\frak H}_2$ to indicate the direct sum of ${\frak H}_1$ and ${\frak H}_2$}. The key of the proof is the definition of mappings $\tau_1:\frak H_2\to\frak H_1$,  $\tau_2:\frak H_2\to\frak H_2$, and $\tau : {\frak H}_2\to  {\frak H}_1\oplus {\frak H}_2$ given by $g \mapsto \tau_1(g)\oplus \tau_2(g) $, with the following two properties:
\begin{enumerate}
\item[(i)] for $h\in {\frak H}_1$ and $g\in {\frak H}_2$, $\langle h , \tau_1(g) \rangle = \theta^{-1} \langle h,g\rangle$;
\item[(ii)]  $\tau$  verifies the isometric property: $\langle \tau(g) , \tau(k) \rangle_{{\frak H}_1\oplus{\frak H}_2} = \langle g,k\rangle_{{\frak H}}$, for every $g,k \in {\frak H}_2$.
\end{enumerate}
In order to define such a mapping $\tau$, we first observe that, by virtue of \eqref{e:esti}, the positive self-adjoint and bounded operator $U$, from ${\frak H}_2$ into itself, given by $g\mapsto U(g) = \pi_{{\frak H}_2} (\pi_{{\frak H}_1} (g) ),$
is such that
$$
\|U\|_{op} = \sup_{g,k \in {\frak H}_2 : \|g\|,\|k\|=1} | \langle U(g) , k\rangle| \leq \theta \sup_{g \in {\frak H}_2 : \|g\|=1} \| \pi_{{\frak H}_1}(g) \|,
$$
and therefore $\|U\|_{op}\leq \theta^2$, by virtue of \eqref{e:esti}. This implies that the operator $V(g) := \sqrt{ {\rm Id} - U/\theta^2} $ is well-defined. In particular one checks that a mapping $\tau$ satisfying the two properties (i) and (ii) listed above is given by $\tau_1(g ) = \theta^{-1} \pi_{{\frak H}_1}(g) $ and $\tau_2(g) = V(g)$, for every $g\in {\frak H}_2$. We now consider two auxiliary independent isonormal Gaussian processes $Y,Z$ over ${\frak H}_1\oplus{\frak H}_2$, and we set $R := \theta Y +\sqrt{1-\theta^2} Z,$
in such a way that $Y,R$ are rigidly correlated with parameter $\theta$, in the sense made clear in the statement of Proposition \ref{p:rigid}. It is also easily verified that, by a direct covariance computation and with obvious notation,
$$
\big( Y({\frak H}_1\oplus\{0\}) , R(\tau({\frak H}_2))  \big) \stackrel{\rm law}{=} (X_1, X_2).
$$
To conclude the proof, we apply Proposition \ref{p:rigid} as follows:
\begin{align*}
|\E[F_1(X_1)F_2(X_2)] | &= |\E[F_1(Y({\frak H}_1\oplus\{0\}))F_2(R(\tau({\frak H}_2)))] |\\ 
& \leq \theta^p \Var(F_1(X_1))^{1/2} \Var(F_2(X_2))^{1/2},
\end{align*}
where we have used the fact that $F_1(Y({\frak H}_1\oplus\{0\}))$ has also Hermite rank $p$, as well as the relations $\Var(F_1(X_1)) = \Var(F_1(Y({\frak H}_1\oplus\{0\}))),$ and $\Var(F_2(X_2)) = \Var(F_2(R(\tau({\frak H}_2)))).$
\end{proof}

\smallskip

\noindent{\bf Acknowledgments}. We are grateful to David Nualart 
for stimulating discussions on this topic. Ivan Nourdin is supported by the FNR Grant R-AGR-3585-10 (APOGee) at the University of Luxembourg. Giovanni Peccati is supported by the FNR Grant R-AGR-3376-10 (FoRGES) at the University of Luxembourg. 
Xiaochuan Yang is supported by the FNR Grant R-AGR-3410-12-Z (MiSSILe) at Luxembourg and Singapore Universities.

\bibliographystyle{plain}




\end{document}